\newenvironment{chapabstract}{%
    \begin{center}%
      \bfseries Abstract
    \end{center}}%
   {\par}
\theoremstyle{definition}
\newtheorem{defn}{Definition}[section]
\newtheorem{exam}[defn]{Example}
\newtheorem{rem}[defn]{Remark}
\theoremstyle{plain}
\newtheorem{lem}[defn]{Lemma}
\newtheorem{thm}[defn]{Theorem}
\newtheorem{pr}[defn]{Proposition}
\DeclareMathOperator*{\esssup}{ess\,sup}
\newcommand{\e}{\mathbb{E}}
\newcommand{\rset}{\mathbb{R}}
\begin{document}
\title{Particles Systems for Mean Reflected BSDEs}
\author{Philippe Briand\thanks{Univ. Grenoble Alpes, Univ. Savoie Mont Blanc, CNRS, LAMA, 73000 Chambéry, France
(philippe.briand@univ-smb.fr)}  
\and H\'el\`ene Hibon\thanks{Univ. Rennes, CNRS, IRMAR - UMR 6625, F-35000 Rennes, France (helene.hibon@univ-rennes1.fr), partially supported by Lebesgue center of mathematics ("Investissements d'avenir"
program - ANR-11-LABX-0020-01 and by ANR-15-CE05-0024-02.}}
\date{}

\maketitle
\begin{chapabstract}
In this paper, we deal with Reflected Backward Stochastic Differential Equations for which the constraint is not on the paths of the solution but on its law as introduced by Briand, Elie and Hu in \cite{MRBSDE}.
We extend the recent work \cite{MRSDE} of Briand, Chaudru de Raynal, Guillin and Labart on the chaos propagation for mean reflected SDEs to the backward framework. When the driver does not depend on $z$, we are able to treat general reflexions for the particles system. We consider linear reflexion when the driver depends also on $z$.
In both cases, we get the rate of convergence of the particles system towards the square integrable deterministic flat solution to the mean reflected BSDE.
\end{chapabstract}


\section{Introduction}

Since their introduction by Pardoux and Peng \cite{PP90} in the 90's, BSDEs have been much studied. They are particularly useful for formulating problems in mathematical finance. In 1997, El Karoui, Kapoudjian, Pardoux, Peng, and Quenez \cite{ElKar} developed the notion of reflected BSDEs
\[ Y_t=\xi+\int_t^T{f(u,Y_u,Z_u) \,du}-\int_t^T{Z_u \,dB_u}+K_T-K_t \quad \forall t\in[0,T] \]
related to obstacle and optimal stopping problems. They formulate therefore their constraint as $Y_t\geq L_t$. More recently, Hu and Tang \cite{HT10} have studied multi-dimensional BSDEs with oblique reflection and their application to optimal switching problems. We refer to the introduction of Briand, Elie and Hu \cite{MRBSDE} for further motivations and references for considering reflected BSDEs.

As in \cite{MRBSDE}, we are concerned here by mean reflected BSDEs (MRBSDEs in short), which are reflected BSDEs with a constraint on the law of the process $Y$ rather than on its paths :
\[\begin{dcases} Y_t=\xi+\int_t^T{f(u,Y_u,Z_u) \,du}-\int_t^T{Z_u \,dB_u}+K_T-K_t, \\ \mathbb{E}[h(Y_t)]\geq 0, \end{dcases} \quad \forall t\in[0,T]\]
with deterministic $K$ and with the Skorokhod condition "$\int_0^T{\mathbb{E}[h(Y_t)] \,dK_t}=0$" \,that allows us to qualify the solution as "flat" when satisfied. Such a model is related to risk measures and acceptance sets that correspond to each other via $\mathcal{A}_{\rho}=\{X:\rho(X)\leq 0\}\,,\, \rho_{\mathcal{A}}(X)=\inf\{r\in\mathbb{R}: r+X\in\mathcal{A}\}$. In the case of an acceptance set is of the form $\mathcal{A}_{\rho}=\{X: \mathbb{E}[h(X)]\geq 0\}$ with $h$ being roughly speaking a utility function, solving the mean reflected BSDE means that $Y_t$ has to be an acceptable position at each $t$. The Value at Risk $\text{VAR}_{\alpha}$ is a typical example, see \cite{Rm1} and \cite{Rm2} for its definition and an overview on coherent and convex risk measures.

We extend in this paper the recent work of Briand, Chaudru de Raynal, Guillin and Labart on the propagation of chaos for mean reflected SDEs \cite{MRSDE} to the backward framework. Their study allows to approximate the solution of mean reflected SDEs by an interacting particles system. The interaction consists in a trajectory reflection and such reflected BSDEs have been widely studied.  Moreover, it is not possible to numerically compute the solution of a mean reflected solutions while several algorithms exist for particles systems based on the empirical distribution.  For more details on propagation of chaos, we refer to Sznitman's notes \cite{Chaos}.

\smallskip

The paper is organized as follows. In Section 2, we present our framework and we recall some results for Mean Reflected BSDEs of \cite{MRBSDE} and we introduce our particles system which turns to be a multidimensional  reflected BSDE in a set which is not necessarily convex. In Section 3, we prove that BSDEs coming from these particles systems have a unique solution in the case where the driver does depend on the variables. This the starting point of the other results. Section 4 contains our main result. In the case where the driver does not depend on $z$, we construct a solution to our particles system and we prove that this system converges to the solution of the mean reflected BSDE. We give also the rate of convergence of the propagation of chaos. Finally, in the last section, we consider the case of general drivers, depending on both $y$ and $z$, for which we manage to treat only linear reflexions.

\smallskip

Let us finish this introduction by giving some notations.

\paragraph*{Notations.}

We will work throughout this paper with the Euclidean norm $|.|$\, and denote for $p\geq 1$

\smallskip

$\mathscr{S}^p$ the set of adapted continuous processes $Y$ on $[0,T]$ such that $\|Y\|_{\mathscr{S}^p}:=\mathbb{E}\left[ \underset{0\leq t\leq T]}{\sup}|Y_t|^p\right]^{1/p}<\infty$

\smallskip

$\mathscr{A}^2$ the closed subset of $\mathscr{S}^2$ consisting of non-decreasing processes starting from $0$.

\smallskip

$\mathscr{M}^p$ the set of predictable processes $Z$ such that $\displaystyle \|Z\|_{\mathscr{M}^p}:=\mathbb{E}\left[\left|\int_0^T{|Z_u|^2 du}\right|^{p/2}\right]^{1/p}<\infty$

\section{Framework}

\subsection{MRBSDE}

Let us first recall some results from \cite{MRBSDE} on Mean Reflected BSDEs (MRBSDEs in short).

\noindent Consider the Mean Reflected BSDE on $(\Omega,\mathscr{F},\mathbb{P})$ endowed with a standard Brownian motion $B=\left(B_t\right)_{0\leq t\leq T}$ of which we denote $\left\{\mathscr{F}_t \,,\, 0\leq t\leq T\right\}$ the augmented natural filtration
\begin{equation}\label{eqMRBSDE}
\begin{dcases} Y_t=\xi+\int_t^T{f(u,Y_u,Z_u) \,du}-\int_t^T{Z_u \,dB_u}+K_T-K_t, \\ \mathbb{E}[h(Y_t)]\geq 0, \end{dcases} \quad 0\leq t\leq T
\end{equation}
and the following set of assumptions

\begin{itemize}[label=($H_\xi$)]
\item The terminal condition $\xi$ is a square integrable $\mathscr{F}_T$-measurable random variable and
\begin{center}
$\mathbb{E}[h(\xi)]\geq 0$.
\end{center}
\end{itemize}

\begin{itemize}[label=($H_f$)]
\item The driver $f:\Omega\times[0,T]\times\mathbb{R}\times\mathbb{R} \rightarrow \mathbb{R}$ is a measurable map, $f(.,0,0)\in \mathscr{M}^2$ and there exists $\lambda \geq 0$ such that $\mathbb{P}$-a.s
\begin{center}
$|f(t,y,z)-f(t,p,q)|\leq \lambda \left(|y-p|+|z-q|\right) \quad \forall t\in[0,T] ,\, \forall y,p,z,q\in\mathbb{R}$.
\end{center}
\end{itemize}

\begin{itemize}[label=($H_h$)]
\item The function $h$ is increasing and bi-Lipschitz: there exist $0<m\leq M$ such that
\begin{center}
$m|x-y|\leq |h(x)-h(y)|\leq M|x-y| \quad \forall x,y\in\mathbb{R}$.
\end{center}
\end{itemize}

\begin{defn}
A \textit{square integrable} solution to the MRBSDE \eqref{eqMRBSDE} is a triple of processes $(Y,Z,K)$ in the space $\mathscr{S}^2\times \mathscr{M}^2\times \mathscr{A}^2$ satisfying the equation together with the constraint. A solution is said to be \textit{flat} if moreover $K$ increases only when needed$,$ i.e we have
\begin{center}
$\displaystyle\int_0^T{\mathbb{E}[h(Y_t)] \,dK_t}=0$
\end{center}
By a \textit{deterministic} solution$,$ we mean a solution for which the process $K$ is deterministic.
\end{defn}

\begin{thm}
Suppose that the parameters $\xi,f$ and $h$ satisfy assumptions $(H_{\xi}), (H_f)$ and $(H_h)$. Then the MRBSDE \eqref{eqMRBSDE} admits a unique square integrable deterministic flat solution.
\end{thm}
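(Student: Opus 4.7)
My plan is to split the argument into two stages, matching the dependence structure of the driver.

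\emph{Stage 1: $(y,z)$-independent driver.} When $f(u) \in \mathscr{M}^2$ does not depend on $(Y,Z)$, determinism of $K$ together with martingale representation forces $Y_t = U_t + (K_T - K_t)$, where $U_t := \mathbb{E}\bigl[\xi + \int_t^T f(u)\,du \,\bigm|\, \mathscr{F}_t\bigr]$. Setting $\alpha(t) := K_T - K_t$, I look for a continuous, nonincreasing $\alpha : [0,T] \to \mathbb{R}_+$ with $\alpha(T)=0$ such that $\mathbb{E}[h(U_t+\alpha(t))] \geq 0$ and the Skorokhod flatness condition is satisfied. Define $L_t := \inf\{x \in \mathbb{R} : \mathbb{E}[h(U_t+x)] \geq 0\}$. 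By $(H_h)$ the map $x \mapsto \mathbb{E}[h(U_t+x)]$ is bi-Lipschitz with slope in $[m,M]$, so $L$ is well-defined; $L^2$-continuity of $U$ gives continuity of $L$ in $t$; and $(H_\xi)$ gives $L_T \leq 0$. The explicit choice $\alpha(t) := \sup_{s \in [t,T]} L_s^+$ then yields a continuous, nonincreasing, nonnegative function with $\alpha(T)=0$ satisfying the constraint; the Skorokhod property holds because $-d\alpha$ is supported on $\{t : L_t = \alpha(t) > 0\}$, whereon continuity of $h$ forces $\mathbb{E}[h(U_t+\alpha(t))] = 0$. Uniqueness of this flat $\alpha$ is standard: any admissible $\alpha$ must majorize $L^+$ hence its backward running supremum, and flatness forces minimality.

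\emph{Stage 2: General Lipschitz driver, via Picard iteration.} Equip $\mathscr{H}^2 := \mathscr{S}^2 \times \mathscr{M}^2$ with the weighted norm $\|(Y,Z)\|_\beta^2 := \mathbb{E}\int_0^T e^{\beta s}(|Y_s|^2 + |Z_s|^2)\,ds$, and define $\Phi(\bar Y, \bar Z) := (Y,Z)$, where $(Y,Z,K)$ is the Stage 1 solution associated with the driver $u \mapsto f(u,\bar Y_u, \bar Z_u)$ (which lies in $\mathscr{M}^2$ by $(H_f)$). For two inputs with outputs $(Y^i,Z^i,K^i)$ and auxiliary processes $U^i$, the estimate from $(H_h)$ on the equation $\mathbb{E}[h(U^i_t+L^i_t)]=0$ yields
\[
|L^1_t - L^2_t| \leq \tfrac{M}{m}\,\mathbb{E}|U^1_t - U^2_t|, \qquad |\alpha^1(t)-\alpha^2(t)| \leq \tfrac{M}{m} \sup_{s \in [t,T]} \mathbb{E}|U^1_s - U^2_s|,
\]
and hence a uniform bound on the total variation of $K^1-K^2$ by the norm of the input differences. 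Applying Itô's formula to $e^{\beta s}|Y^1_s - Y^2_s|^2$, exploiting Lipschitz continuity of $f$ (absorbed via Young's inequality) together with the above bound on the reflection increment, and taking expectations, one obtains a contraction constant $C(\beta) \to 0$ as $\beta \to \infty$. Banach's theorem then yields a unique fixed point $(Y,Z) \in \mathscr{H}^2$, and Stage 1 uniqueness recovers $K$ from $(Y,Z)$.

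\emph{Main obstacle.} The crux is the quantitative Lipschitz estimate on the reflection map $U \mapsto \alpha$ used in Stage 2: it relies critically on the bi-Lipschitz bounds $m,M$ of $(H_h)$ and on the running-supremum structure, and must be sharp enough that, once combined with Young's inequality and the Lipschitz constant of $f$, the contraction constant appearing in the Itô identity can be driven strictly below one by choosing $\beta$ large. The remaining ingredients, namely Stage 1 existence and the classical BSDE estimates, are comparatively routine.
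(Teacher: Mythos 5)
Your Stage~1 is correct and is exactly the construction the paper recalls from \cite{MRBSDE}: for a driver in $\mathscr{M}^2$, determinism of $K$ forces $Y_t=U_t+(K_T-K_t)$ and the flat compensator is the backward running supremum $K_T-K_t=\sup_{s\in[t,T]}L_s^+$; your verification of the constraint, the Skorokhod condition and minimality is the standard one. The gap is in Stage~2. The step ``hence a uniform bound on the total variation of $K^1-K^2$ by the norm of the input differences'' is false: your Lipschitz estimate controls $\sup_t|\alpha^1(t)-\alpha^2(t)|$, but a running supremum is monotone, so the total variation of $\alpha^1-\alpha^2$ is only bounded by $\alpha^1(0)+\alpha^2(0)$, i.e.\ by the a priori size of $K^1_T+K^2_T$, not by the distance between the inputs. (Take $L^2$ strictly decreasing with slope of order $\varepsilon n$ and $L^1=L^2+\varepsilon\sin(n\,\cdot)$: then $\|L^1-L^2\|_\infty=\varepsilon$ while the difference of the two running suprema oscillates by $\Theta(\varepsilon)$ on each of the $n$ periods, so its total variation is of order $n\varepsilon$, which stays of order one as $\varepsilon\to 0$ with $n\varepsilon=1$.) Consequently the reflection term produced by It\^o's formula, $2\int_t^T e^{\beta s}\,\mathbb{E}[\Delta Y_s]\,d(\Delta K)_s$, can only be bounded by $Ce^{\beta T}\sup_s\mathbb{E}\left|\Delta Y_s\right|\cdot\bigl(K^1_T+K^2_T\bigr)$, which is \emph{linear} rather than quadratic in the difference of the iterates and is not damped by letting $\beta\to\infty$ (the exponential weight sits on both sides of the inequality). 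So the weighted-norm/Young scheme does not yield a contraction; the ``main obstacle'' you identify is real and is not overcome by your argument.

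The repair is the route of \cite{MRBSDE}, which the present paper mirrors in its Propositions~\ref{en:eusz} and~\ref{en:lineu}: do not apply It\^o's formula to $|\Delta Y|^2$ at all. Since $K$ is deterministic it contributes nothing to the martingale part, so $Z$ is read off from the representation of $\mathbb{E}\bigl[\xi+\int_0^T f(u,\bar Y_u,\bar Z_u)\,du\,\bigm|\,\mathscr{F}_t\bigr]$ alone, giving $\|\Delta Z\|^2_{\mathscr{M}^2}\leq \lambda^2 T\,\mathbb{E}\int_0^T(|\Delta\bar Y_u|+|\Delta\bar Z_u|)^2\,du$; and the representation $Y_t=U_t+\sup_{s\geq t}L_s^+$ together with your (correct) bound $|\Delta L_t|\leq\frac{M}{m}\,\mathbb{E}|\Delta U_t|$ and Doob's inequality gives $\|\Delta Y\|^2_{\mathscr{S}^2}\leq C\lambda^2 T\,\mathbb{E}\int_0^T(|\Delta\bar Y_u|+|\Delta\bar Z_u|)^2\,du$. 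This is a contraction on $\mathscr{S}^2\times\mathscr{M}^2$ only for $T$ small, and one concludes by partitioning $[0,T]$ and pasting, the intermediate terminal values satisfying $\mathbb{E}[h(Y_{T_k})]\geq 0$ by the constraint. With this replacement of your Stage~2, the two-stage outline coincides with the paper's.
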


Let us recall that, in the constant driver case, the deterministic process $K$ is given by the formula
\[
R_t:=K_T-K_t=\underset{s\geq t}{\sup} \inf\left\{x\geq 0 : \mathbb{E}\left[h\left(x+\mathbb{E}\left[\xi+\int_s^T{f_u\,du}\,\middle|\,\mathscr{F}_s\right]\right)\right]\geq 0\right\}:=\underset{s\geq t}{\sup}\, \psi_s.
\]
The non-constant driver case is obtained thanks to a fixed point argument.

\subsection{Interacting particle system}

Given $N\in\mathbb{N}^*,$ introduce $\{\xi^i\}_{1\leq i\leq N}, \{f^i\}_{1\leq i\leq N}$ and $\{B^i\}_{1\leq i\leq N}$ independent copies of $\xi, f$ and $B$. More precisely,  if $\xi = G\left(\{B_t\}_{0\leq t\leq T}\right)$ and $f(t,y,z)=F\left(t,\{B_{s\wedge t}\}_{0\leq s\leq T},y,z\right)$ for some measurable $G$ and $F$, we take
\begin{equation*}
	\xi^i = G\left(\{B^i_t\}_{0\leq t\leq T}\right), \qquad f^i(t,y,z) = F\left(t,\{B^i_{s\wedge t}\}_{0\leq s\leq T},y,z\right).
\end{equation*}

\noindent The augmented natural filtration of the family of Brownian motions $\{B^i\}_{1\leq i\leq N}$ is denoted $\mathscr{F}^{(N)}$. 

For all $1\leq i\leq N$, let us define  $\theta^i:=\xi^i+\psi_T^{(N)}$ where
\begin{equation*}
	\psi_T^{(N)}:=\inf\left\{ x\geq 0 : \frac{1}{N}\sum_{i=1}^N{h\left(x+\xi^i\right)}\geq 0 \right\},
\end{equation*}
and let us consider the following multidimensional reflected BSDE: 
\begin{equation}\label{eqPS}
\begin{dcases}{} Y_t^i=\theta^i+\int_t^T{f^i(u,Y_u^i,Z_u^{i,i}) \,du}-\int_t^T{\sum_{j=1}^N{Z_u^{i,j} \,dB_u^j}}+K_T^{(N)}-K_t^{(N)} \quad \forall\, 1\leq i\leq N,\\ \frac{1}{N}\sum_{i=1}^N{h(Y_t^i)}\geq 0, \end{dcases} \; 0\leq t\leq T.
\end{equation}
This equation is a multidimensional reflected BSDE in a possibly non convex domain, the domain being convex if and only if the function $h$ is concave.

\begin{rem}
	We add the term $\psi_T^{(N)}$ to each random variable $\xi^i$ to ensure that the condition is satisfied at the terminal time. Indeed, even though the expected value of $h(\xi)$ is positive, we do not have in general
	\begin{equation*}
		\frac{1}{N}\sum_{i=1}^N h\left(\xi^i\right) \geq 0, \quad \mathbb{P}-a.s.
	\end{equation*}
	However, by definition of $\psi_T^{(N)}$, we have
	\begin{equation*}
		\frac{1}{N}\sum_{i=1}^N{h\left(\theta^i\right)}\geq 0, \quad \mathbb{P}-a.s.
	\end{equation*}
\end{rem}

\begin{defn}
A solution $\left(\left\{Y^i,Z^i\right\}_{1\leq i\leq N},K^{(N)}\right)$ to \eqref{eqPS} is said to be \textit{flat} if the Skorokhod condition is satisfied namely
\begin{center}
$\displaystyle \int_0^T{\frac{1}{N}\sum_{i=1}^N{h(Y_t^i)}\; dK_t^{(N)}}=0$.
\end{center}
\end{defn}

\noindent The study of this equation will start with the constant driver case. We state the existence and uniqueness result$,$ needed to develop the fixed point argument for non-constant drivers $,$ but also some a priori estimate that we will use numerous times.

\section{The particle system with constant driver }\label{Tpswcd}

In this section, we consider the case where the driver does not depend on $(y,z)$. Equation \eqref{eqPS} rewrites
\begin{equation}\label{eqCPS}
\begin{dcases} Y_t^i=\theta^i+\int_t^T{f_u^i \,du}-\int_t^T{\sum_{j=1}^N{Z_u^{i,j} \,dB_u^j}}+K_T^{(N)}-K_t^{(N)}, \quad \forall\, 1\leq i\leq N,\\ \frac{1}{N}\sum_{i=1}^N{h(Y_t^i)}\geq 0, \end{dcases} \; 0\leq t\leq T,
\end{equation}
where we recall that, for all $1\leq i\leq N$, $\theta^i=\xi^i+\psi_T^{(N)}$ where
\begin{equation*}
	\psi_T^{(N)}=\inf\left\{ x\geq 0 : \frac{1}{N}\sum_{i=1}^N{h\left(x+\xi^i\right)}\geq 0 \right\}.
\end{equation*}

\noindent Assumption $(H_f)$ reduces in this case to
\begin{itemize}[label=($\tilde{H}_f$)]
\item The process $\{f_s\}_{0\leq s\leq T}$ is square integrable and progressively measurable i.e. $f\in\mathscr{M}^2$.
\end{itemize}

Before stating the result, let us introduce some further notations. Let us consider the progressively measurable process $\psi^{(N)}$ defined by
\[
\psi_t^{(N)}=\inf\left\{x\geq 0 : \frac{1}{N}\, \sum_{i=1}^N h\left(x+U^i_t\right)\geq 0\right\}, \quad 0\leq t\leq T,
\]
where we have set, for all $1\leq i\leq N$,
\[
U_t^i= \mathbb{E}\left[\xi^i+\int_t^T{f_u^i\, du}\,\middle|\,\mathscr{F}_t^{(N)}\right], \quad 0\leq t\leq T.
\]

\begin{thm}\label{CPS}
Assume $(H_{\xi}),(H_h)$ and $(\tilde{H}_f)$ are satisfied. Then the multidimensional reflected BSDE \eqref{eqCPS} admits a unique flat solution in the product space $\left(\mathscr{S}^2(\mathbb{R})\times \mathscr{M}^2(\mathbb{R}^N)\right)^N\times \mathscr{A}^2(\mathbb{R})$.

Moreover, we have, for $1\leq i\leq N$,
\begin{equation*}
	Y^i_t = U^i_t + S_t, \quad 0\leq t\leq T,
\end{equation*}
where $S$ is the Snell envelope of the process $\psi^{(N)}$.
\end{thm}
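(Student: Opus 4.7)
The plan is to construct the solution explicitly via the ansatz $Y^i_t = U^i_t + S_t$, with $S$ the Snell envelope of $\psi^{(N)}$, and to read off uniqueness from this characterisation.

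For each $i$ I would first obtain the representation of $U^i$. Since $\xi^i$ and $f^i$ depend measurably only on $B^i$, and the Brownian motions $\{B^j\}$ are mutually independent, the square integrable $\mathscr{F}^{(N)}$-martingale $U^i_t+\int_0^t f^i_u\,du$ is driven solely by $B^i$, so the martingale representation theorem yields $z^i\in\mathscr{M}^2$ such that $U^i_t=\xi^i+\int_t^T f^i_u\,du-\int_t^T z^i_u\,dB^i_u$; in particular $U^i_T=\xi^i$. The bi-Lipschitz hypothesis on $h$ gives the pointwise bound $\psi^{(N)}_t\leq\frac{1}{m}\bigl|\frac{1}{N}\sum_i h(U^i_t)\bigr|$ and the modulus $|\psi^{(N)}_t-\psi^{(N)}_s|\leq\frac{M}{m}\frac{1}{N}\sum_i|U^i_t-U^i_s|$, so $\psi^{(N)}$ is $\mathscr{F}^{(N)}$-adapted, continuous, and lies in $\mathscr{S}^2$.

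The Snell envelope $S_t=\esssup_{\tau}\mathbb{E}\bigl[\psi^{(N)}_\tau\bigm|\mathscr{F}_t^{(N)}\bigr]$ (with $\tau$ an $\mathscr{F}^{(N)}$-stopping time in $[t,T]$) is then a continuous $\mathscr{S}^2$-supermartingale, and its Doob--Meyer decomposition $S_t=S_0+M_t-K^{(N)}_t$ has $M$ a square integrable martingale and $K^{(N)}\in\mathscr{A}^2$ continuous, increasing only on $\{S_t=\psi^{(N)}_t\}$. Martingale representation for $M$ produces $\tilde Z^j\in\mathscr{M}^2$ with $M_t=\sum_j\int_0^t\tilde Z^j_u\,dB^j_u$. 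Setting $Y^i:=U^i+S$ and $Z^{i,j}:=z^i\delta_{ij}+\tilde Z^j$, the dynamics of $U^i$ and $S$ add up to~\eqref{eqCPS}; the terminal value is $Y^i_T=\xi^i+\psi^{(N)}_T=\theta^i$; and the constraint follows from $S_t\geq\psi^{(N)}_t$ together with monotonicity of $h$. Flatness is the delicate point: on $\{S_t=0\}$ the supermartingale property combined with $S\geq\psi^{(N)}\geq 0$ pins $S$ to $0$ on $[t,T]$, so $dM=dK^{(N)}=0$ there; hence $K^{(N)}$-almost every $t$ has $\psi^{(N)}_t>0$, and the defining identity $\frac{1}{N}\sum_i h(U^i_t+\psi^{(N)}_t)=0$ combined with $S_t=\psi^{(N)}_t$ gives $\frac{1}{N}\sum_i h(Y^i_t)=0$.

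For uniqueness, subtract the identity for $U^i$ from the BSDE for $Y^i$ and take $\mathscr{F}^{(N)}_t$-conditional expectation: the martingale terms vanish and the remaining right-hand side is independent of $i$, yielding $R_t:=Y^i_t-U^i_t=\mathbb{E}\bigl[\psi^{(N)}_T+K^{(N)}_T-K^{(N)}_t\bigm|\mathscr{F}_t^{(N)}\bigr]$, which is a non-negative supermartingale dominating $\psi^{(N)}$ with $R_T=\psi^{(N)}_T$. The BSDE flatness $\int\frac{1}{N}\sum_i h(U^i_t+R_t)\,dK^{(N)}_t=0$, together with the strict monotonicity of $x\mapsto\frac{1}{N}\sum_i h(x+U^i_t)$ (inherited from the bi-Lipschitz lower bound on $h$) and $R_t\geq 0$, forces $R_t=\psi^{(N)}_t$ on the support of $dK^{(N)}$, which is exactly the Skorokhod condition identifying $R$ with the Snell envelope of $\psi^{(N)}$. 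Hence $R=S$, $Y^i=U^i+S$, and the $Z^{i,j}$ are fixed by the martingale representations. The principal difficulty throughout is the matching of the BSDE flat condition with the Snell Skorokhod condition, resolved by the supermartingale trap in existence and by the strict monotonicity of $h$ in uniqueness.
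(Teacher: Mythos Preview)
Your proposal is correct and follows the paper's strategy closely: construct $Y^i=U^i+S$ with $S$ the Snell envelope of $\psi^{(N)}$, verify the constraint and flatness via the Doob--Meyer decomposition, and for uniqueness show that $R_t:=Y^i_t-U^i_t$ is a supermartingale dominating $\psi^{(N)}$ with terminal value $\psi^{(N)}_T$. The only notable divergence is in how uniqueness is finished. The paper first uses minimality of the Snell envelope to get $\tilde Y^i\geq Y^i$ and then runs a stopping-time contradiction argument (defining $\tau=\inf\{u\geq t:\tilde Y^i_u=Y^i_u\}$ and using that $d\tilde K^{(N)}\equiv 0$ on $[t,\tau)$ by the Skorokhod condition for $\tilde Y$). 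You instead identify $R$ directly with $S$ by invoking the Skorokhod characterisation of the Snell envelope (equivalently, uniqueness for the one-dimensional reflected BSDE with obstacle $\psi^{(N)}$, zero driver, and terminal value $\psi^{(N)}_T$). Both are standard; your route is slightly more economical since it packages the stopping-time argument into a known result, while the paper's is self-contained.
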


\begin{proof}
	Let us start by constructing a solution. Let us observe that the process $\psi^{(N)}$ can be written as
	\begin{equation*}
		\psi_t^{(N)} = L\left(U^1_t,\ldots,U^N_t\right), \quad 0\leq t\leq T,
	\end{equation*}
	where, for any $X=(X^1,\ldots,X^N)$ in $\rset^N$, 
	\[
	L(X)=L(X^1,\ldots,X^N) = \inf\left\{x\geq 0 : \frac{1}{N}\sum_{i=1}^N h\left(x+ X^i\right) \geq 0\right\}.
	\]

	As pointed out in \cite{MRBSDE} and \cite{MRSDE}, $L$ is Lipschitz continuous. More precisely,
	\begin{equation}\label{eq:Llip}
		|L(X)-L(Y)|\leq \dfrac{M}{m}\,\dfrac{1}{N}\,\sum_{j=1}^N |X^j-Y^j|.
	\end{equation}
	Indeed, since $h$ is bi-Lipschitz and increasing, we have
	\begin{align*}
		h\left(L(X)+\dfrac{M}{m}\,\dfrac{1}{N}\, \sum_{j=1}^N |X^j-Y^j|+Y^i\right)& \geq m\dfrac{M}{m}\dfrac{1}{N}\sum_{j=1}^N |X^j-Y^j|+h(L(X)+Y^i), \\
		& \geq \dfrac{M}{N}\, \sum_{j=1}^N |X^j-Y^j|+h(L(X)+X^i)-M|X^i-Y^i|.
	\end{align*}
	Summing these inequalities, we get, by definition of $L$,
	\begin{align*}
		\frac{1}{N}\, \sum_{i=1}^N h\left(L(X)+\dfrac{M}{m}\dfrac{1}{N}\sum_j|X^j-Y^j|+Y^i\right) \geq \frac{1}{N}\, \sum_{i=1}^N h(L(X)+X^i) \geq 0.
	\end{align*}
	Thus, using again the definition of $L$,
	 \begin{equation*}
	 	L(Y)\leq L(X)+\dfrac{M}{m}\,\dfrac{1}{N}\,\sum_{j=1}^N |X^j-Y^j|,
	 \end{equation*}
	and the result follows by symmetry. 

	As a byproduct, the process $\psi^{(N)}$ belongs to $\mathscr{S}^2$ and moreover, there exists a constant $C$ independent of $N$ such that :
	\begin{equation}\label{eq:majpsi}
		\mathbb{E}\left[\sup_{0\leq t\leq T} \left|\psi^{(N)}_t\right|^2\right] \leq C \left( 1+ \mathbb{E}\left[|\xi^2| + \int_0^T |f_s|^2 ds\right]\right).
	\end{equation}
	Indeed, let us set $x_0:=\inf\left\{x\geq 0 : h(x)\geq 0\right\}$ which is finite in view of the assumptions on $h$. We have
	\begin{equation*}
		|\psi_t^{(N)}| = \left|L(U_t^1,...,U_t^N)-L(0)+L(0)\right|\leq x_0 + \dfrac{M}{m}\dfrac{1}{N}\sum_j|U_t^j|
	\end{equation*}
	and the estimate follows from Doob and H\"older inequalities.

	Since $\psi^{(N)}$ is in $\mathscr{S}^2$, its Snell envelope $S$ exists and belongs to $\mathscr{S}^2$. In fact $S$ can be taken as a right continuous $\mathscr{F}^{(N)}$-supermartingale  of class (D). Its Doob-Meyer decomposition provides us the existence and uniqueness of $(K^{(N)},M^{(N)})$, square integrable, with $K^{(N)}$ a non-decreasing process starting from $0$ and $M^{(N)}$ a $\mathscr{F}^{(N)}$-martingale such that 
	\[
	S_t=M_t^{(N)}-K_t^{(N)}, \quad 0\leq t\leq T.
	\]
	Since $S_T=\psi_T^{(N)}$
	\begin{equation*}
		S_t=\mathbb{E}\left[M_T^{(N)}\,\middle|\,\mathscr{F}_t^{(N)}\right]-K_t^{(N)}=\mathbb{E}\left[\psi_T^{(N)} + K_T^{(N)}\,\middle|\,\mathscr{F}_t^{(N)}\right]-K_t^{(N)}.
	\end{equation*}
	We obtain moreover that $K_T^{(N)}$ is square integrable.

	Let us set, for $1\leq i\leq N$,
	\begin{equation*}
		Y_t^i=U_t^i + S_t, \quad 0\leq t\leq T.
	\end{equation*}
	We have, 
	\begin{align*}
		Y^i_t & =\mathbb{E}\left[\xi^i+\int_t^T{f_u^i\, du}\,\middle|\,\mathscr{F}_t^{(N)}\right]+\mathbb{E}\left[\psi_T^{(N)} + K_T^{(N)}\,\middle|\,\mathscr{F}_t^{(N)}\right]-K_t^{(N)} \\
		& =\mathbb{E}\left[\theta^i+\int_0^T{f_u^i\, du}+K_T^{(N)}\middle|\,\mathscr{F}_t^{(N)}\right]-\int_0^t{f_u^i\, du}-K_t^{(N)}.
	\end{align*}
	We can apply the representation theorem for $L^2$-martingales to write for some $Z^i$ in $\mathscr{M}^2(\mathbb{R}^N)$
	\[
	Y_t^i=\mathbb{E}\left[\theta^i+\int_0^T{f_u^i\, du}+K_T^{(N)}\right]+\int_0^t{\sum_{j=1}^N{Z_u^{i,j}\,dB_u^j}}-\int_0^t{f_u^i\, du}-K_t^{(N)}.
	\]
	We verify easily that $\left(\left\{Y^i,Z^i\right\}_{1\leq i \leq N},K^{(N)}\right)$ is a solution to \eqref{eqCPS}. 

	For the constraint, since $h$ is nondecreasing, we have, by definition of $\psi^{(N)}$, 
	\[ 
	\frac{1}{N}\,\sum_{i=1}^N h\left(Y_t^i\right) =  \frac{1}{N}\,\sum_{i=1}^N h\left(U_t^i+S_t\right) \geq  \frac{1}{N}\,\sum_{i=1}^N h\left(U_t^i+\psi_t^{(N)}\right) \geq 0.  \]

	It remains to prove that the Skorokhod condition is satisfied. Since $S$ is the Snell envelope of $\psi^{(N)}$ and $K^{(N)}$ is the associated nondecreasing process, $S_t=\psi_t^{(N)} \; dK_t^{(N)}$ almost everywhere. Let us observe modeover that
	\begin{align*}
		0=S_t \mathds{1}_{S_t=0}=\mathbb{E}\left[\left(\psi_T^{(N)} + K_T^{(N)}- K_t^{(N)}\right)\mathds{1}_{S_t=0} \,\middle|\, \mathscr{F}_t^{(N)}\right].
	\end{align*}
	Since $K^{(N)}$ is nondecreasing and $\psi^{(N)}$ nonnegative, we deduce $K^{(N)}$ is constant on $[t,T]$ on the set $\{S_t=0\}$. Thus, we have
	\begin{align*}
		\int_0^T \frac{1}{N}\sum_{i=1}^N h\left(Y_t^i\right) \, dK_t^{(N)}  & =\int_0^T \frac{1}{N}\sum_{i=1}^N h\left(U_t^i+S_t\right)\, \mathds{1}_{S_t>0}\, dK_t^{(N)},\\
	&=\int_0^T \frac{1}{N}\sum_{i=1}^N h\left(U_t^i+\psi^{(N)}_t\right)\, \mathds{1}_{\psi^{(N)}_t>0} dK_t^{(N)} = 0,
	\end{align*}
	by definition of $\psi^{(N)}$.

\smallskip

Let us turn to uniqueness. Let us consider another flat solution $\left(\left\{\widetilde{Y}^i,\widetilde{Z}^i\right\}_{1\leq i \leq N},\widetilde{K}^{(N)}\right)$. We have
\begin{equation*}
	\widetilde{Y}_t^i=U_t^i +\, \mathbb{E}\left[\psi_T^{(N)} + \widetilde{K}_T^{(N)}\,\middle|\,\mathscr{F}_t^{(N)}\right]-\widetilde{K}_t^{(N)}
\end{equation*}
and, since the constraint is satisfied, by definition of $\psi^{(N)}$, the supermartingale 
\begin{equation*}
	\mathbb{E}\left[\psi_T^{(N)} + \widetilde{K}_T^{(N)}\,\middle|\,\mathscr{F}_t^{(N)}\right]-\widetilde{K}_t^{(N)}
\end{equation*}
is bounded from below by the process $\psi^{(N)}_t$. Since $S$ is the Snell envelope of $\psi^{(N)}$, we have
\begin{equation*}
	\mathbb{E}\left[\psi_T^{(N)} + \widetilde{K}_T^{(N)}\,\middle|\,\mathscr{F}_t^{(N)}\right]-\widetilde{K}_t^{(N)} \geq S_t, \qquad \widetilde{Y}_t^i \geq Y^i_t.
\end{equation*}
Let us suppose that there exists $(i,t)$ such that $\mathbb{P}\left(\widetilde{Y}_{t}^i>Y_{t}^i\right) >0$. Let us consider the stopping time
\begin{equation*}
	\tau = \inf\left\{ u \geq t : \widetilde{Y}_{u}^i=Y_{u}^i\right\}.
\end{equation*}
 Then, on the set $\left\{\widetilde{Y}_{t}^i>Y_{t}^i\right\}$,  $T\geq \tau >t$ and $\widetilde{Y}_u^i>Y_u^i$ for $t\in[t,\tau)$. Therefore, on this set, since $h$ is increasing,
 \begin{equation*}
 	\sum_{j=1}^N{h(\widetilde{Y}_t^j)}>\sum_{j=1}^N{h(Y_t^j)}\geq 0
 \end{equation*} 
 and $d\widetilde{K}^{(N)}\equiv 0$ on $[t,\tau)$ due to the Skorokhod condition.

	We have, by definition of $\tau$,
	\begin{align*}
		Y_t^i - \widetilde{Y}_{t}^i & = Y_{\tau}^i - \widetilde{Y}_{\tau}^i -\int_{t}^{\tau} {\sum_{j=1}^N{\left(Z_u^{i,j}-\widetilde{Z}_u^{i,j} \right)\,dB_u^j}} + K_{\tau}^{(N)}-K_{t}^{(N)} - \left(\widetilde{K}_{\tau}^{(N)}-\widetilde{K}_{t}^{(N)}\right), \\
	  &= -\int_{t}^{\tau}{\sum_{j=1}^N{\left(Z_u^{i,j}-\widetilde{Z}_u^{i,j} \right)\,dB_u^j}} + K_{\tau}^{(N)}-K_{t}^{(N)} - \left(\widetilde{K}_{\tau}^{(N)}-\widetilde{K}_{t}^{(N)}\right),
	\end{align*}
	and, since on the set $\{\widetilde{Y}_{t}^i>Y_{t}^i\}$ $\widetilde{K}_{t}^{(N)}=\widetilde{K}_{\tau}^{(N)}$,
	\begin{align*}
	\left(Y_{t}^i - \widetilde{Y}_{t}^i\right)\mathbf{1}_{\widetilde{Y}_{t}^i>Y_{t}^i} 
	&= \left(K_{\tau}^{(N)}-K_{t}^{(N)}\right)\mathbf{1}_{\widetilde{Y}_{t}^i>Y_{t}^i} -\mathbf{1}_{\widetilde{Y}_{t}^i>Y_{t}^i}\int_{t}^{\tau}{\sum_{j=1}^N{\left(Z_u^{i,j}-\widetilde{Z}_u^{i,j} \right)\,dB_u^j}}.
	\end{align*}
	Taking the expectation, we get
	\begin{equation*}
		0> \e\left[\left(Y_{t}^i - \widetilde{Y}_{t}^i\right)\mathbf{1}_{\widetilde{Y}_{t}^i>Y_{t}^i}\right] = \mathbb{E}\left[\left(K_{\tau}^{(N)}-K_{t}^{(N)}\right)\,\mathbf{1}_{\widetilde{Y}_{t}^i>Y_{t}^i} \right] \geq 0,
	\end{equation*}
which is a contradiction. So $\widetilde{Y}^i=Y^i$ for all $1\leq i\leq N$. By uniqueness of the Doob-Meyer decomposition, it follows that $\widetilde{K}^{(N)}=K^{(N)}$  and $\widetilde{Z}=Z$.
\end{proof}

\begin{pr}
	\label{en:utile}
	Let $0\leq t\leq T$. For $1\leq i\leq N$,
	\begin{equation*}
		Y^i_s = \e\left[Y^i_t + \int_s^t f^i_u\, du \,\middle|\,\mathscr{F}_s^{(N)}\right] + R_s,\quad 0\leq s\leq t,
	\end{equation*}
	where $\{R_s\}_{}{0\leq s\leq t}$ is the Snell envelope of the process
	\begin{equation*}
		\phi_s^{(N)}=\inf\left\{x\geq 0 : \frac{1}{N}\, \sum_{i=1}^N h\left(x+\e\left[Y^i_t + \int_s^t f^i_u\, du \,\middle|\,\mathscr{F}_s^{(N)}\right]\right)\geq 0\right\}, \quad 0\leq s\leq t.
	\end{equation*}
\end{pr}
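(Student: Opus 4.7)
The plan is to re-apply Theorem \ref{CPS} on the sub-interval $[0,t]$ in place of $[0,T]$. The key observation is that the restriction of the original solution to $[0,t]$ is itself a flat solution of a multidimensional reflected BSDE of exactly the same form, with terminal time $t$ and terminal value $Y^i_t$ in place of $\theta^i$. Crucially $Y^i_t$ already satisfies $\frac{1}{N}\sum_{i=1}^N h(Y^i_t)\geq 0$, thanks to the constraint of \eqref{eqCPS} evaluated at time $t$, so no counterpart of the $\psi_T^{(N)}$-adjustment is needed.

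First, I would observe that $\bigl(\{Y^i|_{[0,t]},Z^i|_{[0,t]}\}_{1\leq i\leq N}, K^{(N)}|_{[0,t]}\bigr)$ is a flat solution of
\[
\widetilde Y^i_s = Y^i_t + \int_s^t f^i_u\, du - \int_s^t\sum_{j=1}^N Z^{i,j}_u\, dB_u^j + \widetilde K^{(N)}_t - \widetilde K^{(N)}_s, \quad 0\leq s\leq t,
\]
with the same mean constraint. The driver $f^i$ stays in $\mathscr{M}^2$ on $[0,t]$, the terminal value $Y^i_t$ is square integrable and $\mathscr{F}^{(N)}_t$-measurable, and $(H_h)$ is unchanged, so all hypotheses of Theorem \ref{CPS} transfer verbatim to this shorter horizon. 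The Skorokhod condition on $[0,t]$ carries over immediately from the one on $[0,T]$ since the integrand is nonnegative.

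I would then invoke Theorem \ref{CPS} on this shorter horizon. Its existence part constructs a flat solution through the explicit formula
\[
\widetilde Y^i_s = \mathbb{E}\!\left[Y^i_t + \int_s^t f^i_u\, du\,\middle|\,\mathscr{F}_s^{(N)}\right] + R_s, \quad 0\leq s\leq t,
\]
where $R$ is the Snell envelope of precisely the process $\phi^{(N)}$ introduced in the statement, while its uniqueness part forces any flat solution to coincide with this one. Combining the two steps yields $Y^i_s = \widetilde Y^i_s$ for $s\in[0,t]$, which is the announced identity. No step is genuinely delicate: the proposition is essentially a time-consistency statement for the particle system, obtained by substituting $(T,\theta^i)$ with $(t,Y^i_t)$ in the proof of Theorem \ref{CPS}.
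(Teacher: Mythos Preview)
Your argument is correct and takes a genuinely different route from the paper. The paper proceeds by direct computation: starting from the representation $Y^i_s = U^i_s + S_s$ of Theorem~\ref{CPS}, it rewrites $\psi^{(N)}_s$ in terms of the new ``terminal'' data $Y^i_t$, uses the elementary identity $\bigl[(x+a)_+ - a\bigr]_+ = x_+$ for $a\geq 0$ together with the supermartingale property of $S$ to sandwich $S_s$ between $\e\bigl[S_t\mid\mathscr{F}^{(N)}_s\bigr]+R_s$ and itself, and concludes by minimality of the Snell envelope. Your approach instead recognises that the restriction to $[0,t]$ is a flat solution of the same reflected BSDE with terminal datum $Y^i_t$ and then appeals to the uniqueness half of Theorem~\ref{CPS}; this is shorter and conceptually cleaner, highlighting the time-consistency of the particle system. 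One small caveat: Theorem~\ref{CPS} is stated under $(H_\xi)$, which in the paper's setup tacitly carries the IID structure of the $\xi^i$, whereas the $Y^i_t$ are not independent. Your invocation is still legitimate because the \emph{proof} of Theorem~\ref{CPS} nowhere uses independence---only square integrability and the constraint $\frac{1}{N}\sum_i h(\theta^i)\geq 0$---but it would be worth saying this explicitly. The paper's computation, by contrast, stays entirely within the already-established representation and needs no such remark.
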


\begin{proof}
	Let us fix $0\leq t \leq T$ and $1\leq i\leq N$. For $s\leq t$, since $Y^i_t = U^i_t + S_t$,
	\begin{align*}
		\psi^{(N)}_s & = \inf\left\{x\geq 0 : \frac{1}{N}\, \sum_{i=1}^N h\left(x+\e\left[\xi^i + \int_s^T f^i_u\, du \,\middle|\,\mathscr{F}_s^{(N)}\right]\right)\geq 0\right\} \\
		& = \left(\inf\left\{x\in\rset : \frac{1}{N}\, \sum_{i=1}^N h\left(x+\e\left[\xi^i + \int_s^T f^i_u\, du \,\middle|\,\mathscr{F}_s^{(N)}\right]\right)\geq 0\right\}\right)_+ \\
		& = \left(\inf\left\{x\in\rset : \frac{1}{N}\, \sum_{i=1}^N h\left(x-\e\left[S_t\,|\, \mathscr{F}_s^{(N)}\right]+\e\left[Y^i_t + \int_s^t f^i_u\, du \,\middle|\,\mathscr{F}_s^{(N)}\right]\right)\geq 0\right\}\right)_+ \\
		& = \left(\e\left[S_t\,|\, \mathscr{F}_s^{(N)}\right] +\inf\left\{x\in\rset : \frac{1}{N}\, \sum_{i=1}^N h\left(x+\e\left[Y^i_t + \int_s^t f^i_u\, du \,\middle|\,\mathscr{F}_s^{(N)}\right]\right)\geq 0\right\}\right)_+.
	\end{align*}
	Since $S$ is a supermartingale, $S_s\geq \e\left[S_t\,|\, \mathscr{F}_s^{(N)}\right]$ and, taking into account the previous equality together with the fact that, for $a\geq 0$, $\left[(x+a)_+ - a\right]_+ = x_+$,
	\begin{equation*}
		S_s \geq \max\left(\e\left[S_t\,|\, \mathscr{F}_s^{(N)}\right], \psi^{(N)}_s\right) = \e\left[S_t\,|\, \mathscr{F}_s^{(N)}\right] + \phi^{(N)}_s.
	\end{equation*}
	It follows by definition of $R$ that, for $0\leq s \leq t$,
	\begin{equation*}
		S_s \geq \e\left[S_t\,|\, \mathscr{F}_s^{(N)}\right] + R_s \geq \e\left[S_t\,|\, \mathscr{F}_s^{(N)}\right] + \phi^{(N)}_s \geq \psi^{(N)}_s.
	\end{equation*}
	Since $S$ is the smallest supermartingale above $\psi^{(N)}$, we have actually, for $0\leq s\leq t$,
	\begin{equation*}
		S_s = \e\left[S_t\,|\, \mathscr{F}_s^{(N)}\right] + R_s.
	\end{equation*}
	As a byproduct,
	\begin{align*}
		Y^i_s & = \e\left[U^i_t + \int_s^t f^i_u\, du \,|\, \mathscr{F}_s^{(N)} \right] + S_s = \e\left[Y^i_t-S_t + \int_s^t f^i_u\, du \,|\, \mathscr{F}_s^{(N)} \right] + S_s, \\
		& = \e\left[Y^i_t+ \int_s^t f^i_u\, du \,|\, \mathscr{F}_s^{(N)} \right] + R_s.
	\end{align*}
\end{proof}

Let us end this section by an a priori estimate.

\begin{pr}\label{APEst}
There exists a constant $C$ independent of $N$ such that, for all $1\leq i\leq N$,
\[
\left\|Y^i\right\|^2_{\mathscr{S}^2}+\left\|Z^i\right\|^2_{\mathscr{M}^2}+\left\|K^{(N)}\right\|^2_{\mathscr{A}^2}\leq C\left( 1+ \mathbb{E}[\xi^2]+ \left\|f\right\|^2_{\mathscr{M}^2} \right).
\]
\end{pr}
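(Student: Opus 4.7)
The plan is to use the explicit representation $Y^i_t = U^i_t + S_t$ from Theorem \ref{CPS} to control $Y^i$ directly, and then to extract the bounds on $Z^i$ and $K^{(N)}$ from the BSDE \eqref{eqCPS} by applying It\^o's formula to $|Y^i|^2$, closing the system by an $\varepsilon$-absorption between these two quantities.

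First, I would bound $\|U^i\|_{\mathscr{S}^2}$ by Doob's maximal inequality applied to the conditional expectation $U^i_t = \mathbb{E}[\xi^i + \int_t^T f^i_u\,du\,|\,\mathscr{F}^{(N)}_t]$, which after Cauchy--Schwarz gives a constant multiple of $\mathbb{E}[\xi^2] + \|f\|^2_{\mathscr{M}^2}$. Next, $\|S\|_{\mathscr{S}^2}$ is controlled by the Snell envelope domination
\[
 S_t \leq \mathbb{E}\!\left[\sup_{0\leq u\leq T}\psi^{(N)}_u \,\middle|\,\mathscr{F}^{(N)}_t\right],
\]
combined once more with Doob and the estimate \eqref{eq:majpsi}, whose constant is already independent of $N$. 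The triangle inequality then yields the announced bound on $\|Y^i\|^2_{\mathscr{S}^2}$.

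For $Z^i$ and $K^{(N)}$, I would apply It\^o's formula to $|Y^i_t|^2$ between $0$ and $T$, take expectation (with the usual localisation argument so that the Brownian stochastic integral vanishes in expectation), and rearrange to obtain
\[
 \mathbb{E}\!\left[\int_0^T |Z^i_u|^2\,du\right] = \mathbb{E}[(\theta^i)^2] - (Y^i_0)^2 + 2\,\mathbb{E}\!\left[\int_0^T Y^i_u f^i_u\,du\right] + 2\,\mathbb{E}\!\left[\int_0^T Y^i_u\,dK^{(N)}_u\right].
\]
The first three terms are already controlled by the data and by $\|Y^i\|^2_{\mathscr{S}^2}$; the cross term is majorised by $2\,\mathbb{E}[\sup_u|Y^i_u|\,K^{(N)}_T]$ and split via Young's inequality into $\varepsilon\,\mathbb{E}[(K^{(N)}_T)^2] + \varepsilon^{-1}\|Y^i\|^2_{\mathscr{S}^2}$. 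To close the loop, I would use \eqref{eqCPS} at $t=0$ (and the fact that $Y^i_0$ is deterministic, since $\mathscr{F}^{(N)}_0$ is trivial) to write
\[
 K^{(N)}_T = Y^i_0 - \theta^i - \int_0^T f^i_u\,du + \int_0^T \sum_{j=1}^N Z^{i,j}_u\,dB^j_u,
\]
and bound $\mathbb{E}[(K^{(N)}_T)^2]$ by $4|Y^i_0|^2 + 4\,\mathbb{E}[(\theta^i)^2] + 4T\,\|f\|^2_{\mathscr{M}^2} + 4\,\|Z^i\|^2_{\mathscr{M}^2}$ via Cauchy--Schwarz and It\^o's isometry. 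Choosing $\varepsilon$ small enough (e.g.\ $\varepsilon = 1/8$) absorbs the $\|Z^i\|^2_{\mathscr{M}^2}$ term and yields the bounds on both $\|Z^i\|^2_{\mathscr{M}^2}$ and $\|K^{(N)}\|^2_{\mathscr{A}^2} = \mathbb{E}[(K^{(N)}_T)^2]$.

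The main obstacle is precisely this coupled dependence between $Z^i$ and $K^{(N)}$: the It\^o expansion controls $Z^i$ only up to $K^{(N)}$, while the BSDE controls $K^{(N)}$ only up to $Z^i$, so an explicit $\varepsilon$-absorption is required to untangle them. Once $\|Y^i\|_{\mathscr{S}^2}$ has been handled by the Snell envelope formula, the only non-trivial input to this step is the cross term $\int Y^i\,dK^{(N)}$, which the already controlled sup-norm of $Y^i$ absorbs.
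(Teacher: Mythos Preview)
Your proposal is correct and follows essentially the same route as the paper: first bound $\|Y^i\|_{\mathscr{S}^2}$ via the representation $Y^i=U^i+S$, Doob's inequality and the estimate \eqref{eq:majpsi}, then apply It\^o's formula to $|Y^i|^2$ and close the coupled $Z^i$/$K^{(N)}$ system by expressing $K^{(N)}_T$ from \eqref{eqCPS} and absorbing with the same choice $\varepsilon=1/8$. The only cosmetic difference is that the paper substitutes the It\^o bound into the $K^{(N)}_T$ expression (absorbing a $\tfrac{1}{2}\mathbb{E}[(K^{(N)}_T)^2]$) and then returns to It\^o for $Z^i$, whereas you substitute in the opposite direction; the two are equivalent.
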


\begin{proof}

Since $Y_t^i = U_t^i+S_t$ and $S_t=\esssup_{\tau\geq t}\,\mathbb{E}\left[\psi_{\tau}^{(N)}\,\middle|\,\mathscr{F}_t^{(N)}\right]$,
\begin{align*}
|Y_t^i| \leq \left|U_t^i\right| + {\esssup}_{\tau\geq t}\,\mathbb{E}\left[\left|\psi_{\tau}^{(N)}\right|\,\middle|\,\mathscr{F}_t^{(N)}\right] \leq \left|U_t^i\right| + \mathbb{E}\left[ \underset{s}{\sup}\,\left|\psi_s^{(N)}\right|\,\middle|\,\mathscr{F}_t^{(N)}\right] 
\end{align*}
and the estimate for $Y^i$ follows from Doob's inequality together with the bound for $\psi^{(N)}$ given by \eqref{eq:majpsi}.

Applying Ito's formula, we obtain
\begin{equation*}
	\left|Y_t^i\right|^2+\int_t^T{\sum_{j=1}^N{\left|Z_u^{i,j}\right|^2} \,du}= |\xi^i|^2 +2\int_t^T{Y_u^i \,f_u^i \, du}+2\int_t^T{Y_u^i  \,dK_u^{(N)}}-2\int_t^T{Y_u^i\,\sum_{j=1}^N{Z_u^{i,j} \,dB_u^j}}
\end{equation*}
and we get, for $0\leq t\leq T$,
\begin{multline}
	\label{eq:use}
	\mathbb{E}\left[ \left|Y_t^i\right|^2+\int_t^T{\left|Z_u^i\right|^2 du} \right] \\
	\leq \mathbb{E}[\xi^2]+ \mathbb{E}\left[\int_t^T{|f_u|^2 \,du}\right]+ (T-t+8)\, \mathbb{E}\left[\underset{s\in[t,T]}{\sup}\left|Y_s^i\right|^2\right]+ \frac{1}{8}\, \mathbb{E}\left[\left|K_T^{(N)}-K_t^{(N)}\right|^2\right].
\end{multline}
Since
\begin{equation*}
	K_T^{(N)}=Y_0^i-\xi^i-\int_0^T{f_u^i \,du}+\int_0^T{\sum_{j=1}^N}{Z_u^{i,j}dB_u^j},
\end{equation*}
we have, with the previous estimate,
\begin{align*}
\mathbb{E}\left[\left|K_T^{(N)}\right|^2\right]
&\leq 4\mathbb{E}\left[\left|Y_0^i\right|^2 +\int_0^T{\left|Z_u^i\right|^2 du}\right]+4\left( \mathbb{E}[\xi^2]+T\, \mathbb{E}\left[\int_0^T{|f_u|^2 \,du}\right] \right)\\
&\leq \frac{1}{2}\, \mathbb{E}\left[\left|K_T^{(N)}\right|^2\right] + 4\left( 2\mathbb{E}[\xi^2]+(1+T) \left\|f\right\|^2_{\mathscr{M}^2} \right)+ 4(T+8)\, \mathbb{E}\left[\underset{t}{\sup}|Y_t^i|^2\right]
\end{align*}
which gives the bound for $K^{(N)}$. Coming back to~\eqref{eq:use}, we get the estimate for $Z^i$ and the result.
\end{proof}

\begin{rem}\label{UBK}
In what follows, we will also need an upper bound for $|K_T|^2$. It is however easier to obtain as for $\mathbb{E}\left[\left|K_T^{(N)}\right|^2\right]$ since we have $K_T=\underset{t}{\sup}\,\psi_t$ and some Lipschitz property for $\psi$ (see \cite{MRBSDE}) :

\begin{flalign*} &\left|K_T\right|\leq \underset{t}{\sup}\left|\psi_t\right| \leq  \frac{M}{m}\,\underset{t}{\sup}\, \mathbb{E}\left[\,\left|\mathbb{E}\left[\xi+\int_t^T{f_u\, du}\,\middle|\,\mathscr{F}_t\right]-\xi\right|\,\right]\leq \frac{M}{m} \left(2\mathbb{E}\left[|\xi|\right]+\mathbb{E}\left[\int_0^T{|f_u| \,du}\right]\right).&\\
&\text{so there exists \,} \tilde{C}(m,M,T) \text{\, such that \,}  \left|K_T\right|^2\leq \tilde{C}(m,M,T)\left( \mathbb{E}[\xi^2]+  \left\|f\right\|^2_{\mathscr{M}^2} \right).&
\end{flalign*}
\end{rem}

\bigskip

\section{Propagation of chaos : general reflexion}

In this section, we deal with the case where the driver depends on $y$ but does not depend on $z$. Equation \eqref{eqPS} rewrites in this case
\begin{equation}\label{sansZ}
\begin{dcases}{} Y_t^i=\theta^i+\int_t^T{f^i(u,Y_u^i) \,du}-\int_t^T{\sum_{j=1}^N{Z_u^{i,j} \,dB_u^j}}+K_T^{(N)}-K_t^{(N)} \quad \forall\, 1\leq i\leq N\\ \frac{1}{N}\sum_{i=1}^N{h(Y_t^i)}\geq 0 \end{dcases} \; 0\leq t\leq T
\end{equation}

\begin{pr}\label{en:eusz}
The reflected BSDE~\eqref{sansZ} has a unique square integrable flat solution.
\end{pr}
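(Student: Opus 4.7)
The plan is to apply a Banach fixed point argument building on Theorem~\ref{CPS}. Given $\bar Y = (\bar Y^1,\ldots,\bar Y^N)$ in $(\mathscr{S}^2)^N$, set $\tilde f^i_u := f^i(u,\bar Y^i_u)$. Assumption $(H_f)$ (with the $z$ variable dropped) ensures $\tilde f^i \in \mathscr{M}^2$, so Theorem~\ref{CPS} produces a unique flat solution $(\{Y^i, Z^i\}_i, K^{(N)})$ of the constant-driver system with $\tilde f^i$ as driver, and I define $\Phi(\bar Y) := (Y^1,\ldots,Y^N)$. Any fixed point of $\Phi$ yields a flat solution to~\eqref{sansZ}, and conversely any flat solution to~\eqref{sansZ} is, by the uniqueness part of Theorem~\ref{CPS} applied to its own driver $u \mapsto f^i(u, Y^i_u)$, a fixed point of $\Phi$; hence both existence and uniqueness reduce to the fixed point being unique.

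To obtain a contraction on a terminal interval $[\sigma, T]$, I would use the explicit representation from Theorem~\ref{CPS}: for two inputs $\bar Y$, $\bar Y'$ with respective images $Y$, $Y'$,
\[
Y^i_t - {Y'}^i_t = (U^i_t - {U'}^i_t) + (S_t - S'_t),
\]
where $S$, $S'$ are the Snell envelopes of $\psi^{(N)}$ and its analogue built from $\bar Y'$. The Lipschitz property of $f$ gives
\[
|U^i_t - {U'}^i_t| \leq \lambda\, \mathbb{E}\left[\int_t^T |\bar Y^i_u - \bar Y'^i_u|\, du \,\middle|\, \mathscr{F}_t^{(N)}\right],
\]
while $|S_t - S'_t| \leq \esssup_{\tau \geq t} \mathbb{E}\bigl[|\psi^{(N)}_\tau - \psi'^{(N)}_\tau|\,\big|\, \mathscr{F}_t^{(N)}\bigr]$ combined with~\eqref{eq:Llip} bounds $|S_t - S'_t|$ by $\tfrac{M}{m}\tfrac{1}{N}\sum_j \mathbb{E}\bigl[\sup_\tau |U^j_\tau - {U'}^j_\tau|\,\big|\, \mathscr{F}_t^{(N)}\bigr]$. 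After taking suprema and applying Doob's $L^2$ inequality, I expect an estimate of the form
\[
\sum_{i=1}^N \mathbb{E}\left[\sup_{\sigma \leq t \leq T} |Y^i_t - {Y'}^i_t|^2\right] \leq C(\lambda, M/m)\, (T-\sigma)^2 \sum_{i=1}^N \mathbb{E}\left[\sup_{\sigma \leq u \leq T} |\bar Y^i_u - \bar Y'^i_u|^2\right],
\]
with $C$ independent of $N$. For $\delta := T - \sigma$ small enough, $\Phi$ is then a strict contraction on $(\mathscr{S}^2([\sigma,T]))^N$, producing a unique flat solution on $[T-\delta, T]$.

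To extend to $[0,T]$, I would iterate backwards: once $Y^i_{T-\delta}$ is known, Proposition~\ref{en:utile} provides the same structural formula $Y^i_s = \mathbb{E}\bigl[Y^i_{T-\delta} + \int_s^{T-\delta} f^i_u\, du\,\big|\, \mathscr{F}_s^{(N)}\bigr] + R_s$ on $[T-2\delta, T-\delta]$, with $R$ the Snell envelope of an analogously defined process $\phi^{(N)}$. The same Lipschitz estimates apply verbatim, giving a contraction on this new subinterval; since $\delta$ depends only on $\lambda$ and $M/m$ (and not on the terminal condition), finitely many iterations cover $[0,T]$, and the pieces glue into a global flat solution thanks to the dynamic programming identity.

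The main obstacle is the contraction estimate itself: the Snell envelope couples the $N$ components through a supremum over future stopping times, so the usual weighted-norm trick $e^{\beta t}$ does not dissipate the driver's Lipschitz contribution in a straightforward way. The two essential inputs that make the plan work are the $N$-independent Lipschitz bound~\eqref{eq:Llip} on $L$ and the dynamic programming identity of Proposition~\ref{en:utile}, which together let me localize the argument on small subintervals while preserving the reflected structure.
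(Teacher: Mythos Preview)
Your proposal is correct and follows essentially the same route as the paper: a Banach fixed point built on Theorem~\ref{CPS}, using the decomposition $Y^i=U^i+S$, the Lipschitz bound~\eqref{eq:Llip} on $L$, and Doob's inequality to obtain a contraction in $(\mathscr{S}^2)^N$ for small time, followed by backward iteration over subintervals of uniform length. The paper's constant is $8\lambda^2T^2\bigl(1+4(M/m)^2\bigr)$, matching your $C(\lambda,M/m)(T-\sigma)^2$, and your explicit appeal to Proposition~\ref{en:utile} for the gluing step is exactly how the paper justifies restarting on each subinterval.
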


\begin{proof}
	We use a fixed point argument. Let us introduce the map $\Gamma$ from $\mathscr{S}^2(\mathbb{R})^N$ into itself defined by $Y=\Gamma(P)$ where $(Y,Z,K)$ stands for the unique square integrable flat solution to 
\[
\begin{dcases} Y_t^i=\theta^i+\int_t^T{f^i(u,P_u^i) \,du}-\int_t^T{\sum_{j=1}^N{Z_u^{i,j} \,dB_u^j}}+K_T^{(N)}-K_t^{(N)} \quad \forall\, 1\leq i\leq N\\ \frac{1}{N}\sum_{i=1}^N{h(Y_t^i)}\geq 0 \end{dcases} \; 0\leq t\leq T.
	\]

Let $\left\{Y^i\right\}_{1\leq i\leq N}=\Gamma\left(\left\{P^i\right\}_{1\leq i\leq N}\right), \left\{\widetilde{Y}^i\right\}_{1\leq i\leq N}=\Gamma\left(\left\{\widetilde{P}^i\right\}_{1\leq i\leq N}\right)$ and denote by $\Delta\cdot$ the corresponding differences. We have
	\begin{align*}
		\left|\Delta Y^i_t\right| & \leq \left|\Delta U^i_t\right| + |\Delta S_t| \leq \lambda T \, \mathbb{E}\left[\sup_s \left|\Delta P_s^i\right| \,\middle|\, \mathscr{F}^{(N)}_t\right] + \mathbb{E}\left[\sup_s \left|\Delta \psi_s^{(N)}\right| \,\middle|\, \mathscr{F}^{(N)}_t\right].
	\end{align*}
	From Doob's inequality, we get
	\begin{align*}
		\mathbb{E}\left[\sup_{0\leq t\leq T} \left|\Delta Y^i_t\right|^2\right] & \leq 8\lambda^2 T^2 \mathbb{E}\left[\sup_{0\leq t\leq T} \left|\Delta P^i_t\right|^2\right] + 8 \,\mathbb{E}\left[\sup_{0\leq t\leq T} \left|\Delta \psi^{(N)}_t\right|^2 \right],
	\end{align*}
	and using \eqref{eq:Llip}, we get
	\begin{align*}
		\mathbb{E}\left[\sup_{0\leq t\leq T} \left|\Delta Y^i_t\right|^2\right] & \leq 8 \lambda^2 T^2 \mathbb{E}\left[\sup_{0\leq s\leq T} \left|\Delta P^i_t\right|^2\right] + 8 \left(\dfrac{M}{m}\right)^2 \mathbb{E}\left[\sup_{0\leq t\leq T} \left( \frac{1}{N} \sum_{j=1}^N  |\Delta U^j_{t}|  \right)^2 \right].
	\end{align*}
	But, we have, since $f$ is Lipschitz,
	\begin{equation*}
		\frac{1}{N}\sum_{j=1}^N  |\Delta U^j_{t}| \leq \lambda T \, \mathbb{E}\left[ \frac{1}{N}\sum_{j=1}^N \sup_s \left|\Delta P_s^j\right| \,\middle|\, \mathscr{F}^{(N)}_t\right],
	\end{equation*}
	and Doob's and H\"older's inequalities lead to
	\begin{equation*}
		\mathbb{E}\left[\sup_{0\leq t\leq T} \left|\Delta Y^i_t\right|^2\right] \leq 8 \lambda^2 T^2 \mathbb{E}\left[\sup_{0\leq t\leq T} \left|\Delta P^i_t\right|^2\right] + 32 \lambda^2 T^2 \left(\dfrac{M}{m}\right)^2 \mathbb{E}\left[ \frac{1}{N}\sum_{j=1}^N \sup_{0\leq t\leq T} |\Delta P^j_t |^2 \right].
	\end{equation*}
	Summing these inequalities gives
	\begin{equation*}
		\mathbb{E}\left[\frac{1}{N}\sum_{i=1}^N \sup_{0\leq t\leq T} \left|\Delta Y^i_t\right|^2\right]  \leq 8 \lambda^2 T^2 \left( 1+ 4\left(\dfrac{M}{m}\right)^2 \right) \mathbb{E}\left[ \frac{1}{N}\sum_{i=1}^N \sup_{0\leq t\leq T} \left|\Delta P^i_{t}\right|^2 \right].
	\end{equation*}
	
	It follows that $\Gamma$ has a unique fixed point in $\mathscr{S}^2(\mathbb{R})^N$ as soon as $T$ is small enough: there exists a unique $\{Y^i\}_{1\leq i\leq N}$ solving \eqref{sansZ} for some $\left(\{Z^i\}_{1\leq i\leq N},K^{(N)}\right)\in \mathscr{M}^2(\mathbb{R}^N)^N\times \mathscr{A}^2(\mathbb{R})$. Since $\{Y^i\}_{1\leq i\leq N}$ is unique, Itô's formula shows that $\{Z^i\}_{1\leq i\leq N}$ is also unique and we deduce finally that $K^{(N)}$ is also unique.
	
For larger values of $T$, let $\varepsilon$ be such that $\lambda^2 \varepsilon^2 \left( 1+ 4\left(\dfrac{M}{m}\right)^2 \right) \leq \dfrac{1}{16}$ and let us pick an integer $r$ such that $T/r < \varepsilon$. 

For $k=0,\ldots,r$, $T_k = kT/r$. Denote, for $k=r,\ldots,1$, let $\left(\{Y^{i,k},Z^{i, k}\}_{1\leq i\leq N},K^{(N), k}\right)$ be the unique triple constructed on $[T_{k-1} , T_{k}]$ with $K^{(N),k}_{T_{k-1}}=0$. The triple $\left(\{Y^i,Z^i\}_{1\leq i\leq N},K^{(N)}\right)$ defined by
\[
Y_t^i = Y^{i,k}_t, \quad Z_t^i = Z^{i, k}_t, \quad K_t^{(N)} = K_t^{(N),k}+\sum_{\ell<k} K_{T_{\ell}}^{(N),\ell} \quad T_{k-1}\leq t \leq T_k 
\]
is the unique flat solution to equation \eqref{sansZ}.
\end{proof}

\begin{pr}
	\label{en:estsz}
	There exists a constant $C$ independent of $N$ such that, for all $1\leq i\leq N$,
	\[
	\left\|Y^i\right\|^2_{\mathscr{S}^2}+\left\|Z^i\right\|^2_{\mathscr{M}^2}+\left\|K^{(N)}\right\|^2_{\mathscr{A}^2}\leq C\left( 1+ \mathbb{E}[\xi^2] + \mathbb{E}\left[\int_0^T |f(t,0)|^2 dt\right] \right).
	\]
\end{pr}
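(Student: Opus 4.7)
The plan is to reduce to the constant-driver case already handled by Proposition~\ref{APEst}, and then absorb the resulting self-referential Lipschitz term either by taking $T$ small enough or, for arbitrary $T$, by iterating across sub-intervals exactly as in the proof of Proposition~\ref{en:eusz}. Concretely, let $(\{Y^i,Z^i\}_{1\leq i\leq N},K^{(N)})$ be the unique flat solution to \eqref{sansZ} given by Proposition~\ref{en:eusz}. Freezing the $y$-variable, that is setting $g^i_u:=f^i(u,Y^i_u)$, the very same triple also solves the constant-driver system \eqref{eqCPS} with driver $g^i$. Since $g^i\in\mathscr{M}^2$ thanks to $(H_f)$ and the fact that $Y^i\in\mathscr{S}^2$, Proposition~\ref{APEst} directly yields an $N$-independent constant $C_0$ with
\[
\|Y^i\|^2_{\mathscr{S}^2}+\|Z^i\|^2_{\mathscr{M}^2}+\|K^{(N)}\|^2_{\mathscr{A}^2}\leq C_0\Bigl(1+\mathbb{E}[\xi^2]+\|g^i\|^2_{\mathscr{M}^2}\Bigr).
\]

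Next I would use $(H_f)$ to control $g^i$: the bound $|g^i_u|^2\leq 2|f(u,0)|^2+2\lambda^2|Y^i_u|^2$ gives
\[
\|g^i\|^2_{\mathscr{M}^2}\leq 2\,\mathbb{E}\!\left[\int_0^T|f(u,0)|^2\,du\right]+2\lambda^2 T\,\|Y^i\|^2_{\mathscr{S}^2}.
\]
Plugging this into the previous inequality, the key step is to \emph{absorb} the $\|Y^i\|^2_{\mathscr{S}^2}$-term into the left-hand side. When $2C_0\lambda^2 T<1$ this is immediate and gives the desired bound on $Y^i$; the bounds on $Z^i$ and $K^{(N)}$ then follow by feeding that estimate back into the first inequality.

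For arbitrary $T$, the main obstacle is precisely this smallness condition, and I would deal with it via the same partition strategy as at the end of the proof of Proposition~\ref{en:eusz}: fix $\varepsilon$ with $2C_0\lambda^2\varepsilon<1$, pick an integer $r$ with $T/r<\varepsilon$, set $T_k=kT/r$, and apply the local estimate on each sub-interval $[T_{k-1},T_k]$ viewed as a mean-reflected BSDE with terminal condition $Y^i_{T_k}$. Since $\frac{1}{N}\sum_i h(Y^i_{T_k})\geq 0$ holds along the global solution, no $\psi$-style correction is needed at the intermediate terminal times and Proposition~\ref{APEst} genuinely applies on each piece. Iterating the local bounds backward from $k=r$ down to $k=1$ chains them into the global estimate; because every constant involved (those of Proposition~\ref{APEst} and the Lipschitz constant $\lambda$) is independent of $N$, the final constant $C$ inherits this property.
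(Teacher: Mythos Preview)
Your strategy is essentially the paper's: reduce to the constant-driver estimate of Proposition~\ref{APEst}, absorb the Lipschitz remainder when the horizon is short, and iterate across sub-intervals otherwise (the paper does this via Proposition~\ref{en:utile} and the same computation as in Proposition~\ref{en:eusz}). One point, however, is glossed over and needs to be made explicit.

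Proposition~\ref{APEst} is stated for the setup in which the $f^i$ are i.i.d.\ copies of a single process $f$, and the term $\|f\|^2_{\mathscr{M}^2}$ on its right-hand side refers to that common process. Your frozen drivers $g^i_u=f^i(u,Y^i_u)$ are coupled through $K^{(N)}$ and are not i.i.d., so the proposition does not apply verbatim. If you trace through its proof (specifically the bound~\eqref{eq:majpsi} on $\psi^{(N)}$, which involves $\tfrac{1}{N}\sum_j|U^j_t|$), what actually comes out is
\[
\|Y^i\|^2_{\mathscr{S}^2}\;\leq\; C_0\Bigl(1+\mathbb{E}[\xi^2]+\|g^i\|^2_{\mathscr{M}^2}+\tfrac{1}{N}\textstyle\sum_{j=1}^{N}\|g^j\|^2_{\mathscr{M}^2}\Bigr),
\]
with an extra averaged term. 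To close from here you must either (i) invoke exchangeability of the particles, so that $\|g^j\|^2_{\mathscr{M}^2}$ does not depend on $j$ and the average collapses, or (ii) first sum the displayed inequality over $i$ to bound $\tfrac{1}{N}\sum_i\|Y^i\|^2_{\mathscr{S}^2}$, absorb, and only then feed that average back into the individual estimate. The paper takes route~(ii); this is precisely why inequality~\eqref{eq:i} is first summed over $i$ before returning to the bound for a fixed $i$. Either route completes your argument, but as written the sentence ``Proposition~\ref{APEst} directly yields'' is not yet justified.
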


\begin{proof}
	In this proof, $C$ denotes a constant independent of $N$ which may change from line to line.
	
	Let $(T_k)_{0\leq k\leq r}$ be a subdivision of $[0,T]$ with $\max_{1\leq k\leq r} (T_k-T_{k-1})=\pi$. We set $I_k = [T_{k-1}, T_k]$. By Proposition~\ref{en:utile}, for $1\leq k\leq r$ and $t\in I_k$,
	\begin{equation*}
		Y^i_t = \e\left[Y^i_{T_k} + \int_t^{T_k} f^i(u,Y^i_u)\, du \,\middle|\,\mathscr{F}_t^{(N)}\right] + R_t,
	\end{equation*}
	where $\{R_t\}_{T_{k-1}\leq t\leq T_{k}}$ is the Snell envelope of the process
	\begin{equation*}
		\phi_t^{(N)}=\inf\left\{x\geq 0 : \frac{1}{N}\, \sum_{i=1}^N h\left(x+\e\left[Y^i_{T_k} + \int_t^{T_k} f^i(u,Y^i_u)\, du \,\middle|\,\mathscr{F}_t^{(N)}\right]\right)\geq 0\right\}.
	\end{equation*}
	Doing the same computation as in the proof of Proposition~\ref{en:eusz}, we get
	\begin{equation}\label{eq:i}
		\mathbb{E}\left[\sup_{t\in I_k} \left|Y^i_t\right|^2\right] \leq  C A(i,k) + C\pi\, \mathbb{E}\left[\sup_{t\in I_k} \left|Y^i_t\right|^2\right] +  C\, \frac{1}{N}\sum_{j=1}^N \left(A(j,k) + \pi\, \mathbb{E}\left[\sup_{t\in I_k} \left|Y^j_t\right|^2\right] \right),
	\end{equation}
	where, for $1\leq i\leq N$ and $1\leq k \leq r$,
	\begin{equation*}
		A(j,k) = 1 + \e\left[|Y^j_{T_k}|^2 + \int_{I_k} |f^j(s,0)|^2\, ds \right].
	\end{equation*}
	Summing these inequalities gives
	\begin{equation*}
		\mathbb{E}\left[\frac{1}{N}\sum_{i=1}^N \sup_{t\in I_k} \left|Y^i_t\right|^2\right]  \leq C\, \frac{1}{N}\sum_{j=1}^N \left(A(j,k) + \pi\, \mathbb{E}\left[\sup_{t\in I_k} \left|Y^j_t\right|^2\right] \right).
	\end{equation*}
	Let us choose $\pi$ small enough to get, for $1\leq k \leq r$, 
	\begin{equation*}
		\mathbb{E}\left[\frac{1}{N}\sum_{i=1}^N \sup_{t\in I_k} \left|Y^i_t\right|^2\right]  \leq C\, \frac{1}{N}\sum_{j=1}^N A(j,k).
	\end{equation*}
	Let us observe that
	\begin{align*}
		A(j,r) & = 1+ \e\left[ |\xi^j|^2 + \int_{I_k} |f^j(s,0)|^2\, ds \right], \\
		A(j,r) & \leq 1+ \e\left[ \sup_{t\in I_{k+1}}|Y^j_t|^2 + \int_{I_k} |f^j(s,0)|^2\, ds \right], \quad 1\leq k \leq r-1.
	\end{align*}
	Thus, for any constant $\alpha >0$,
	\begin{multline*}
		\sum_{k=1}^r \alpha^k\, \mathbb{E}\left[\frac{1}{N}\sum_{i=1}^N \sup_{t\in I_k} \left|Y^i_t\right|^2\right] \\
		\leq C\alpha^r \left(1+ \e\left[|\xi|^2 + \int_0^T |f(s,0)|^2 ds\right]\right) + \frac{C}{\alpha} \sum_{k=1}^r \alpha^k\, \mathbb{E}\left[\frac{1}{N}\sum_{i=1}^N \sup_{t\in I_k} \left|Y^i_t\right|^2\right],
	\end{multline*}
	and choosing $\alpha > C$, we get
	\begin{equation*}
		\mathbb{E}\left[\frac{1}{N}\sum_{i=1}^N \sup_{0\leq t\leq T} \left|Y^i_t\right|^2\right] \leq C\left(1+ \e\left[|\xi|^2 + \int_0^T |f(s,0)|^2 ds\right]\right).
	\end{equation*}
	With the help of this inequality, we can go back to \eqref{eq:i} and do the same computation, to get, for $0\leq i\leq N$,
	\begin{equation*}
		\mathbb{E}\left[\sup_{0\leq t\leq T} \left|Y^i_t\right|^2\right] \leq C\left(1+ \e\left[|\xi|^2 + \int_0^T |f(s,0)|^2 ds\right]\right).
	\end{equation*}
	
	We conclude the proof exactly as in the proof of Proposition~\ref{APEst}.
\end{proof}

Let us recall that,  if $\xi = G\left(\{B_t\}_{0\leq t\leq T}\right)$ and $f(t,y,z)=F\left(t,\{B_{s\wedge t}\}_{0\leq s\leq T},y,z\right)$ for some measurable $G$ and $F$, we took
\begin{equation*}
	\xi^i = G\left(\{B^i_t\}_{0\leq t\leq T}\right), \qquad f^i(t,y,z) = F\left(t,\{B^i_{s\wedge t}\}_{0\leq s\leq T},y,z\right).
\end{equation*}
Let us consider $\left(\overline{Y}^{\,i}, \overline{Z}^{\,i}, K\right)$ independent copies of $(Y,Z,K)$ i.e $\left(\overline{Y}^{\,i}, \overline{Z}^{\,i}, K\right)$ is the flat deterministic solution to
\begin{equation*}
	\overline{Y}^{\,i}_t = \xi^i + \int_t^T f^i\left(u,\overline{Y}^{\,i}_u\right)du - \int_t^T \overline{Z}^{\,i}_u dB^i_u + (K_T-K_t), \quad 0\leq t\leq T,
\end{equation*}
with $\mathbb{E}\left[h\left(\overline{Y}^{\,i}_t\right)\right]\geq 0$.

\begin{thm}\label{Conv}
Let us set, for $1\leq i\leq N\,,\, \Delta Y^i:=Y^i-\overline{Y}^{\,i}, \Delta K:= K^{(N)}-K$ and $\Delta Z^i:= Z^i_t -\overline{Z}^{\,i}\,e_i$ where $(e_1,\ldots,e_N)$ stand for the canonical basis in $\mathbb{R}^N$.
\begin{enumerate}
\item
If $h$ is of class $\mathscr{C}^2$ with bounded derivatives and $\underset{t}{\sup}\, \mathbb{E}\left[\left|\overline{Z}_t^{\,1}\right|^4\right]<\infty$ then
\begin{center}
$\left\|\Delta Y^i\right\|^2_{\mathscr{S}^2}=\mathcal{O}(N^{-1})\,,\, \left\|\Delta Z^i\right\|^2_{\mathscr{M}^2}=\mathcal{O}(N^{-1/2})$ and $\left\|\Delta K\right\|^2_{\mathscr{A}^2}=\mathcal{O}(N^{-1/2})$.
\end{center}
\item
If $\xi\in L^p,\, f(.,0)\in\mathscr{M}^p$ and $\underset{t}{\sup}\, \mathbb{E}\left[\left|\overline{Z}_t^{\,1}\right|^p\right]<\infty$\, for some $p>4$ then 
\begin{center}
$\left\|\Delta Y^i\right\|^2_{\mathscr{S}^2}=\mathcal{O}(N^{-1/2})\,,\, \left\|\Delta Z^i\right\|^2_{\mathscr{M}^2}=\mathcal{O}(N^{-1/4})$ and $\left\|\Delta K\right\|^2_{\mathscr{A}^2}=\mathcal{O}(N^{-1/4})$.
\end{center}
\end{enumerate} 
\end{thm}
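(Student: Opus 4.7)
The strategy mirrors the chaos propagation argument for mean reflected SDEs in \cite{MRSDE}, transposed to the backward setting using the Snell-envelope representation of Proposition~\ref{en:utile}. Write $\overline{R}_t = K_T - K_t = \sup_{s\geq t} \overline\psi_s$, where $\overline\psi_s = \inf\{x\geq 0 : \mathbb{E}[h(x+\overline U_s)]\geq 0\}$ and $\overline U_s = \mathbb{E}[\xi + \int_s^T f(u, \overline Y_u)\,du\,|\,\mathscr{F}_s]$; and let $\widetilde \psi^{(N)}_t = L(\overline U^1_t,\ldots,\overline U^N_t)$ be the empirical counterpart built on iid copies. The key auxiliary quantity is
\[
\rho_N := \mathbb{E}\Bigl[\sup_{0\leq t\leq T}\bigl|\widetilde\psi^{(N)}_t - \overline\psi_t\bigr|^2\Bigr].
\]

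\textbf{Step 1: from $\Delta Y$ to an $L$-increment.} Fix a subdivision $0=T_0<\cdots<T_r=T$ with mesh $\pi$. On each $I_k = [T_{k-1},T_k]$, Proposition~\ref{en:utile} gives
\[
Y^i_s = \mathbb{E}\Bigl[Y^i_{T_k} + \int_s^{T_k} f^i(u,Y^i_u)\,du\,\Big|\,\mathscr{F}^{(N)}_s\Bigr] + R_s,
\]
with $R$ the Snell envelope of $\phi^{(N)}$ built from $Y^i_{T_k}$. The same holds for $\overline Y^i$ with $\overline R_s$ being the (here deterministic) Snell envelope of the analogous $\overline\phi$. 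Subtracting, using that $|\mathrm{Snell}(a)-\mathrm{Snell}(b)|\leq \mathbb{E}[\sup_u |a_u-b_u|\,|\,\mathscr{F}_s]$ and the Lipschitz constant~\eqref{eq:Llip} of $L$, and applying Doob, one obtains on $I_k$
\[
\mathbb{E}\Bigl[\sup_{t\in I_k}|\Delta Y^i_t|^2\Bigr] \leq C \mathbb{E}[|\Delta Y^i_{T_k}|^2] + C\pi\,\mathbb{E}\Bigl[\sup_{t\in I_k}|\Delta Y^i_t|^2\Bigr] + C\,\frac{1}{N}\sum_{j=1}^N \mathbb{E}\Bigl[\sup_{t\in I_k}|\Delta Y^j_t|^2\Bigr] + C\rho_N.
\]

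\textbf{Step 2: Gronwall iteration.} Choosing $\pi$ small, the $\pi$-term can be absorbed. Summing over $i$ gives a recursion on $a_k := \frac{1}{N}\sum_i \mathbb{E}[\sup_{I_k}|\Delta Y^i|^2]$ of the form $a_k \leq C(a_{k+1} + \rho_N)$, which after backwards iteration (terminal value $a_r$ is controlled by the concentration of $\psi_T^{(N)}$ around $0$, a similar LLN piece) yields $\frac{1}{N}\sum_i \|\Delta Y^i\|_{\mathscr{S}^2}^2 \leq C\rho_N$, and exchangeability then transfers the bound to each index $i$.

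\textbf{Step 3: LLN estimate for $\rho_N$.} This is the main analytical obstacle. Split
\[
|\widetilde\psi^{(N)}_t - \overline\psi_t| \leq |\widetilde\psi^{(N)}_t - \overline\psi_t|,
\]
and note that $\widetilde\psi^{(N)}_t = L(\overline U^1_t,\ldots,\overline U^N_t)$ with $\overline U^i_t$ iid copies of $\overline U_t$. Under the $C^2$ assumption on $h$, a Taylor expansion of $\frac{1}{N}\sum h(x+\overline U^i_t)$ around $\overline\psi_t$ (using $\mathbb{E}[h(\overline\psi_t + \overline U_t)]=0$ and bounded derivatives) produces
\[
\mathbb{E}\Bigl[\sup_t|\widetilde\psi^{(N)}_t - \overline\psi_t|^2\Bigr] = \mathcal{O}(N^{-1}),
\]
by CLT-type estimates combined with the moment bound on $\overline Z$ (needed to handle the $t$-sup via the Kolmogorov continuity criterion applied to $t\mapsto \frac{1}{N}\sum h(x+\overline U^i_t)$). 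Without $C^2$, only $L$-Lipschitz bounds are available and Jensen/Hölder yield the weaker $\mathcal{O}(N^{-1/2})$; the stronger $L^p$ moment hypothesis for $p>4$ is precisely what is needed to control higher moments in the Kolmogorov criterion and absorb the extra time regularity loss.

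\textbf{Step 4: $\Delta Z$ and $\Delta K$.} Once $\|\Delta Y^i\|_{\mathscr{S}^2}^2$ is controlled, apply Itô's formula to $|\Delta Y^i|^2$ between $0$ and $T$. Using Burkholder--Davis--Gundy, the Lipschitz property of $f$, the a priori bounds of Propositions~\ref{APEst} and~\ref{en:estsz}, and the simple identity giving $\Delta K$ from the equation, one controls $\|\Delta Z^i\|_{\mathscr{M}^2}^2$ and $\|\Delta K\|_{\mathscr{A}^2}^2$ by the $\mathscr{S}^2$-norm of $\Delta Y^i$ times a square-root factor (because the $K$-term is bounded only in $L^2$, not uniformly, forcing a Cauchy--Schwarz), which accounts for the loss from $N^{-1}$ down to $N^{-1/2}$ (resp.\ $N^{-1/2}$ to $N^{-1/4}$) in the rates stated for $\Delta Z$ and $\Delta K$.

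The principal difficulty is Step~3, where the uniform-in-time control of $\widetilde\psi^{(N)} - \overline\psi$ requires both the smoothness of $h$ (to remove the first-order term) and enough integrability of $\overline Z$ (to handle time modulus of continuity).
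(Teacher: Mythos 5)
Your architecture matches the paper's: reduce $\Delta Y^i$ to the difference of reflection terms via the Lipschitz constant $M/m$ of $L$ and the Snell-envelope representation, then estimate the centered empirical fluctuation of $\overline{\psi}^{\,(N)}-\psi$, then recover $\Delta Z^i$ and $\Delta K$ from It\^o's formula with the square-root loss coming from Cauchy--Schwarz on $\int \Delta Y^i\, d\Delta K$ against the merely-$L^2$ bound on $K_T^{(N)}$ --- your Step 4 is exactly right about where the degradation from $N^{-1}$ to $N^{-1/2}$ (resp.\ $N^{-1/2}$ to $N^{-1/4}$) comes from. Your Step 1 uses a subdivision and a backward recursion where the paper runs a single global Gronwall argument on $[0,T]$ (splitting $\psi^{(N)}-\psi$ into $\psi^{(N)}-\overline{\psi}^{\,(N)}$, controlled by $\frac{M}{m}\frac1N\sum_j|\Delta U^j|$, plus the i.i.d.\ fluctuation $\overline{\psi}^{\,(N)}-\psi$); both routes work and yield $\|\Delta Y^i\|^2_{\mathscr{S}^2}\leq C\|\overline{\psi}^{\,(N)}-\psi\|^2_{\mathscr{S}^2}$. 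The general (non-smooth) case of your Step 3 is also essentially the paper's: bound by $\sup_t W_1(\nu^{(N)}_t,\nu_t)$ and invoke the uniform-in-time Fournier--Guillin estimate from \cite{MRSDE}, the $p>4$ moments being used to verify $\mathbb{E}[|\overline{U}^{\,i}_t-\overline{U}^{\,i}_s|^q]\leq C|t-s|^{q/2}$.

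The genuine gap is the smooth case of Step 3, which is the heart of part 1. ``Taylor expansion plus CLT-type estimates plus Kolmogorov continuity'' is not a proof of $\mathbb{E}[\sup_t|\cdot|^2]=\mathcal{O}(N^{-1})$: a pointwise CLT gives the rate at each fixed $t$, and a chaining/Kolmogorov argument on the empirical process $G_N(t)=\frac1N\sum_i\{h(\psi^*_t+\overline{U}^{\,i}_t)-\mathbb{E}[h(\psi^*_t+\overline{U}^{\,i}_t)]\}$ would require fourth-moment control of its increments, hence $L^4$ bounds on $\xi$, $f(\cdot,0)$ --- assumptions that part 1 of the theorem does not make (it only assumes $\xi\in L^2$ and $\sup_t\mathbb{E}[|\overline{Z}^{\,1}_t|^4]<\infty$). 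The paper avoids this by applying It\^o's formula to $h(V^i_t)$ with $V^i_t=\psi^*_t+\overline{U}^{\,i}_t$ (using that $t\mapsto\psi^*_t$ is locally Lipschitz with Radon--Nikodym derivative $\Psi$), which decomposes $G_N$ into a terminal i.i.d.\ average, drift averages, a martingale term $\frac1N\int\sum_i h'(V^i_u)\overline{Z}^{\,i}_u dB^i_u$ handled by Doob's $L^2$ maximal inequality, and a second-order term $\frac1N\sum_i h''(V^i_u)|\overline{Z}^{\,i}_u|^2$; each contributes variance $\mathcal{O}(N^{-1})$ uniformly in $t$. This also shows that you have misattributed the role of the hypothesis $\sup_t\mathbb{E}[|\overline{Z}^{\,1}_t|^4]<\infty$: it is not there for a time-modulus argument but to make the variance of the It\^o correction $h''(V^i)|\overline{Z}^{\,i}|^2$ finite. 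Without supplying this (or an equivalent) decomposition, the claimed $\mathcal{O}(N^{-1})$ rate in part 1 is not established under the stated hypotheses.
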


The following lemma gives sufficient conditions on the terminal condition and the driver for the extra assumption on $Z$ to be satisfied.

\begin{lem}\label{en:CN}
Suppose for a given $p\geq 2,$ $\xi\in L^p, f(.,0)\in \mathscr{M}^p$ and $f$ $\lambda$-Lipschitz with respect to $y$ uniformly in time. Suppose also $f$ continuously differentiable in $y$ with uniformly bounded derivative and $\xi$ and $f(.,y)$ Malliavin differentiable for each $y$ with
\begin{enumerate}
	\item $\underset{\theta}{\sup}\, \mathbb{E}\left[ \left|D_{\theta}\xi\right|^p \right] <\infty$
	\item $D_{\theta} f(t,y)$ is $K_{\theta}$-Lipschitz continuous in $y$ uniformly in time  and 
\[ 
\underset{\theta}{\sup}\, \left\| D_{\theta} f(.,0) \right\|_{\mathscr{M}^p} <\infty, \qquad \underset{\theta}{\sup}\, K_{\theta} <\infty.
 \]
\end{enumerate}

Then,  $\sup_t\e\left[|Z_t|^p\right] < \infty$.
\end{lem}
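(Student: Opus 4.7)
The plan is to use Malliavin calculus, exploiting the crucial fact that in the MRBSDE $(Y,Z,K)$ the process $K$ is \emph{deterministic}, so that $D_\theta K \equiv 0$ and no extra term arises when the equation is differentiated in the Malliavin sense. Classically one has $Z_t = D_t Y_t$ almost everywhere, and so an $L^p$-bound on the Malliavin derivative uniform in $\theta$ and $t$ immediately gives the desired control on $Z$.

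First I would establish that $Y$ and $Z$ are Malliavin differentiable. To do this I would go back to the fixed point construction recalled in Section~2.1, realizing the MRBSDE as a Picard iteration. Since at each step the auxiliary process $K$ produced from the formula $K_T-K_t=\sup_{s\geq t}\psi_s$ is deterministic, each iteration reduces to a standard Lipschitz BSDE with terminal condition $\xi + (K_T-K^n_T)$, to which the classical Malliavin differentiability results (in $L^p$, by the assumptions on $\xi$ and $f$) apply. Uniform estimates on $D_\theta Y^n,D_\theta Z^n$ are obtained by standard $L^p$ a priori estimates, and passing to the limit $Y,Z$ inherit Malliavin differentiability. Differentiating the MRBSDE then gives, for $\theta\leq t\leq T$,
\begin{equation*}
D_\theta Y_t = D_\theta \xi + \int_t^T \bigl[(D_\theta f)(u,Y_u) + \partial_y f(u,Y_u)\,D_\theta Y_u\bigr]\,du - \int_t^T D_\theta Z_u\,dB_u,
\end{equation*}
with $D_\theta Y_t = 0$ for $t < \theta$, and the identification $Z_t = D_t Y_t$.

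This is a linear BSDE whose generator is $\lambda$-Lipschitz in $D_\theta Y$. The inhomogeneous term is controlled by
\begin{equation*}
|(D_\theta f)(u,Y_u)| \leq |(D_\theta f)(u,0)| + K_\theta\,|Y_u|,
\end{equation*}
which lies in $\mathscr{M}^p$ uniformly in $\theta$ thanks to $\sup_\theta \|D_\theta f(\cdot,0)\|_{\mathscr{M}^p} <\infty$, $\sup_\theta K_\theta <\infty$, and an $L^p$ a priori bound on $Y$ itself (which is obtained by extending Proposition~\ref{APEst} to the $L^p$ setting, using once again that $K$ is deterministic so that $K_T$ is bounded by a deterministic constant in terms of $\|\xi\|_{L^p}$ and $\|f(\cdot,0)\|_{\mathscr{M}^p}$ as in Remark~\ref{UBK}). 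Together with $\sup_\theta \mathbb{E}[|D_\theta\xi|^p] <\infty$, the classical $L^p$ estimate for linear BSDEs then yields a constant $C$, independent of $\theta$ and $t$, such that $\mathbb{E}[|D_\theta Y_t|^p]\leq C$ for every $\theta\leq t\leq T$. Setting $\theta = t$ gives $\sup_t \mathbb{E}[|Z_t|^p] < \infty$.

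The main obstacle is the Malliavin differentiability step, since the classical BSDE Malliavin calculus is not directly formulated for reflected equations. The entire argument hinges on observing that the mean-reflection is carried out through a \emph{deterministic} process $K$, so that, once $K$ is known, the MRBSDE is merely a standard BSDE with shifted terminal condition $\xi+(K_T-K_t)$ to which the classical Malliavin differentiability theory applies verbatim; this deterministic character also guarantees that $D_\theta K \equiv 0$ in the BSDE for $D_\theta Y$.
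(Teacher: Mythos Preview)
Your proposal is correct and follows essentially the same route as the paper. Both arguments hinge on the observation that $K$ is deterministic, so that $D_\theta K\equiv 0$ and the MRBSDE, once differentiated in the Malliavin sense, becomes the same linear BSDE for $D_\theta Y$ that you wrote down; both then invoke standard $L^p$ a~priori estimates and the identification $Z_t=D_tY_t$. The only cosmetic differences are that the paper appeals directly to Proposition~5.3 of El~Karoui--Peng--Quenez for the Malliavin differentiability (rather than redoing the Picard argument) and writes out the estimate for $p=2$ before remarking that the general $p\geq 2$ case is analogous.
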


\begin{exam}
	Let us consider the Markovian framework 
	\begin{equation*}
		\xi = g(X_T), \quad f(s,y)= F(X_T,y),
	\end{equation*}
	where
	\begin{equation*}
		X_t = x_0 + \int_0^t b(X_r)\, dr + \int_0^t \sigma(X_r)\, dB_r, \quad 0\leq t\leq T.
	\end{equation*}
	The the assumptions of the previous lemme are satisfied when $b$, $\sigma$, $g$ and $F$ are continuously differentiable with $\sigma$, $\partial_x b$, $\partial_x \sigma$ and $\partial_y F$ bounded and $\partial_x g$ and $\partial_x F$ with polynomial growth.
\end{exam}

\begin{proof}[Proof of Theorem~\ref{Conv}] Let us recall that for all $1\leq i\leq N,$
\begin{align*}
Y_t^i &= U_t^i + S_t \text{\, with \,} S_t=\underset{\tau \text{\,s.t\,}\geq t}{\esssup\,}\mathbb{E}\left[\psi_{\tau}^{(N)}\,\middle|\,\mathscr{F}_t^{(N)}\right],\\
\overline{Y}_t^{\,i} &= \overline{U}_t^{\,i} + R_t \text{\, with \,} R_t=\underset{s\geq t}{\sup}\,\psi_s= \underset{\tau \text{\,s.t\,}\geq t}{\esssup\,}\mathbb{E}\left[\psi_{\tau}\,\middle|\,\mathscr{F}_t^{(N)}\right].
\end{align*}
We consider also
\[\overline{\psi}_t^{\,(N)}= \inf\left\{ x\geq 0 : \dfrac{1}{N}\sum_{j=1}^N{h(x+\overline{U}_t^j)}\geq 0\right\}.\]
Since the Brownian motion are independent,
\begin{align*}
	\overline{U}_t^{\,i} = \mathbb{E}\left[ \xi^i + \int_t^T{f^i(u,\overline{Y}_u^{\,i}) \,du} \,\middle|\, \mathscr{F}^i_t \right] & = \mathbb{E}\left[ \xi^i + \int_t^T{f^i(u,\overline{Y}_u^{\,i}) \,du} \,\middle|\, \mathscr{F}^{(N)}_t \right], \\
	& := \mathbb{E}\left[ \xi^i + \int_t^T{\overline{f}_u^{\,i}\,du} \,\middle|\, \mathscr{F}^{(N)}_t \right],
\end{align*}
and, we have
\begin{equation*}
	U_t^{\,i}= \mathbb{E}\left[ \xi^i + \int_t^T{f^i(u,Y_u^{\,i}) \,du} \,\middle|\, \mathscr{F}^{(N)}_t \right].
\end{equation*}

$\bullet$ \textbf{Step 1} We have, for $t\geq r$,
\begin{multline*}
\left|\Delta Y^i_t\right|
 \leq \left|\Delta U^i_t\right| + |S_t-R_t| \leq \lambda  \mathbb{E}\left[ \int_t^T{\left|\Delta Y^i_u\right| du} \,\middle|\, \mathscr{F}^{(N)}_t\right] + \mathbb{E}\left[\sup_{t\leq s\leq T} \left|\psi^{(N)}_s - \psi_s\right| \,\middle|\, \mathscr{F}^{(N)}_t\right], \\
 \leq \lambda  \mathbb{E}\left[ \int_t^T{\left|\Delta Y^i_u\right| du} \,\middle|\, \mathscr{F}^{(N)}_t\right] + \mathbb{E}\left[\sup_{r\leq s\leq T} \left|\Delta \psi^{(N)}_s\right| \,\middle|\, \mathscr{F}^{(N)}_t\right] + \mathbb{E}\left[\sup_{0\leq s\leq T} \left|\overline{\psi}^{\,(N)}_s -\psi_s\right| \,\middle|\, \mathscr{F}^{(N)}_t\right],
\end{multline*}
and by Doob's inequality
\begin{equation*}
\mathbb{E}\left[\sup_{r\leq t\leq T}\left|\Delta Y^i_t\right|^2\right] \leq 8\lambda^2 T \mathbb{E}\left[\int_r^T{\left|\Delta Y^i_u\right|^2 du}\right] + 16 \mathbb{E}\left[\sup_{r\leq t\leq T} \left|\Delta \psi^{(N)}_t\right|^2\right] + 16\left\|\overline{\psi}^{\,(N)}-\psi\right\|^2_{\mathscr{S}^2}.
\end{equation*}

On the other hand, using \eqref{eq:Llip}, when $t\geq r$,
\begin{equation*}
\left|\Delta \psi^{(N)}_t\right| \leq \frac{M}{m} \frac{1}{N} \sum_{j=1}^N{\left|\Delta U^j_t\right|} \leq \lambda \, \frac{M}{m} \mathbb{E}\left[ \frac{1}{N} \sum_{j=1}^N \int_r^T \left|\Delta Y^j_u\right| du \,\middle|\, \mathscr{F}^{(N)}_t\right].
\end{equation*}
Using Doob's inequality and then H\"older's inequality, we get, for all $0\leq r\leq T$,
\begin{multline}\label{eq:presque}
\mathbb{E}\left[\sup_{r\leq t\leq T}\left|\Delta Y^i_t\right|^2\right] \\ \leq 8\lambda ^2 T \left( \mathbb{E}\left[\int_r^T{\left|\Delta Y^i_u\right|^2 du}\right] + 8\left(\frac{M}{m}\right)^2 \mathbb{E}\left[\frac{1}{N}\sum_{j=1}^N \int_r^T{\left|\Delta Y^j_u\right|^2 du} \right] \right)+ 16 \left\| \overline{\psi}^{\,(N)} -\psi \right\|^2_{\mathscr{S}^2}.
\end{multline}
Summing these inequalities, we obtain
\begin{align*}
\mathbb{E}\left[\frac{1}{N} \sum_{i=1}^N\sup_{r\leq t\leq T}\left|\Delta Y^i_t\right|^2\right] & \leq 8\lambda^2 T \left(1+8\left(\frac{M}{m}\right)^2\right) \int_r^T{\mathbb{E}\left[ \frac{1}{N}\sum_{j=1}^N{\left|\Delta Y^j_u\right|^2} \right] du} + 16 \left\| \overline{\psi}^{\,(N)} -\psi \right\|^2_{\mathscr{S}^2}\\
		 & \leq 8\lambda^2 T \left(1+8\left(\frac{M}{m}\right)^2\right) \int_r^T{\mathbb{E}\left[ \frac{1}{N}\sum_{j=1}^N{\sup_{u\leq s\leq T}\left|\Delta Y^j_s\right|^2} \right] du}  + 16 \left\| \overline{\psi}^{\,(N)} -\psi \right\|^2_{\mathscr{S}^2} 
\end{align*}
and Gronwall's Lemma gives
\begin{equation*}
\mathbb{E}\left[\frac{1}{N} \sum_{i=1}^N{\sup_{0\leq t\leq T}\left|\Delta Y^i_t\right|^2}\right] \leq 16 \exp\left(8\lambda^2 T \left(1+8\left(\frac{M}{m}\right)^2\right)\right) \left\|\overline{\psi}^{\,(N)}-\psi\right\|^2_{\mathscr{S}^2}.
\end{equation*}
	
	Coming back to the estimate~\eqref{eq:presque}, we finally deduce that
\begin{equation*}
\left\|\Delta Y^i\right\|^2_{\mathscr{S}^2} \leq C(\lambda,m,M,T) \left\|\overline{\psi}^{\,(N)}-\psi\right\|^2_{\mathscr{S}^2}.
\end{equation*}

$\bullet$ \textbf{Step 2} For all $0\leq t\leq T$, denote $\nu_t$ the common law of the random variables $\left\{\overline{U}_t^{\,i}\right\}_{1\leq i\leq N}$ and their empirical law $\displaystyle \nu_t^{(N)}:= \frac{1}{N}\sum_{i=1}^N{\delta_{\overline{U}_t^{\,i}}}$.

Let us define $H:(x,\mu)\in\mathbb{R}\times\mathcal{M}_1\mapsto \int{h(x+y)\,\mu(dy)}$. For each probability measure $\mu$, $x\longmapsto H(x,\mu)$ is nondecreasing and bi-Lipschitz withe same constants as $h$. Let us also introduce 
\begin{equation*}
	\psi_t^*= \inf\left\{ x\in\mathbb{R} : \mathbb{E}\left[h(x+\overline{U}_t^{\,i})\right]\geq 0\right\}
\end{equation*}
and ${\overline{\psi}_t^{\,(N)}}^*$ defined in the same way. Since $h$ is continuous, one has 
\begin{equation*}
	H(\psi_t^*,\nu_t)=H({\overline{\psi}_t^{\,(N)}}^*,\nu_t^{(N)})=0.
\end{equation*} 

Of course, $\left|\psi_t-\overline{\psi}_t^{\,(N)}\right|\leq \left|\psi_t^*-{\overline{\psi}_t^{\,(N)}}^*\right|$ so that
\begin{equation}\label{HLip}
\begin{split}
\left\|\overline{\psi}^{\,(N)}-\psi\right\|^2_{\mathscr{S}^2} &\leq \dfrac{1}{m^2} \mathbb{E}\left[\underset{0\leq t\leq T}{\sup}\,\left|H({\overline{\psi}^{\,(N)}}^*,\nu_t^{(N)})-H(\psi_t^*,\nu_t^{(N)})\right| ^2\right],\\
&\leq \dfrac{1}{m^2} \mathbb{E}\left[\underset{0\leq t\leq T}{\sup}\,\left|H(\psi_t^*,\nu_t^{(N)})-H(\psi_t^*,\nu_t)\right| ^2\right].
\end{split}
\end{equation}

\textbf{1. The smooth case.}\quad 
Let us start by the case where $h$ is smooth. Since $\sup_t \e\left[\left|\overline Z^1_t\right|^2\right]$ is finite, it is not hard to check, as done in~\cite{MRSDE}, that $t\longmapsto \psi^*_t$ is locally Lipschitz.

Set $\displaystyle \Delta H:=H(\psi_t^*,\nu_t^{(N)})-H(\psi_t^*,\nu_t) =\frac{1}{N}\sum_{i=1}^N\left\{h(V_t^i)- \mathbb{E}\left[h(V_t^i)\right]\right\}$ where $V_t^i:=\psi_t^*+\overline{U}_t^{\,i}$.\\ It comes from Ito's formula that
\begin{align*}
\Delta H &=\frac{1}{N}\sum_{i=1}^N\left\{h(V_T^i)-\mathbb{E}\left[h(V_T^i)\right]\right\} -\int_t^T\frac{1}{N}\sum_{i=1}^N\left\{h'(V_u^i)\left(\Psi_u-\overline{f}_u^{\,i}\right)-\mathbb{E}\left[h'(V_u^i)\left(\Psi_u-\overline{f}_u^{\,i}\right)\right]\right\}du \\
&- \frac{1}{N}\int_t^T{\sum_{i=1}^N{ h'(V_u^i)\overline{Z}_u^{\,i}\, dB_u^i}} - \frac{1}{2}\int_t^T\frac{1}{N}\sum_{i=1}^N\left\{h''(V_u^i)\left|\overline{Z}_u^{\,i}\right|^2 - \mathbb{E}\left[h''(V_u^i)\left|\overline{Z}_u^{\,i}\right|^2\right]\right\}du
\end{align*}
with $\Psi$ the Radon-Nikodym derivative of $\psi^*$. Combined with \eqref{HLip}, it follows
\begin{align*}
&\left\|\overline{\psi}^{\,(N)}-\psi\right\|^2_{\mathscr{S}^2}\leq \frac{4}{N m^2} \mathbb{V}\left[h(V_T^1)\right]+\frac{4T}{N m^2}\int_0^T{\mathbb{V}\left[h'(V_u^1)\left(\Psi_u-\overline{f}_u^{\,i}\right)\right] du} +\frac{16}{N m^2}\int_0^T{\mathbb{E}\left[\left|h'(V_s^1)\overline{Z}_s^{\,1}\right|^2\right] ds}\\
&+ \frac{T}{N m^2}\int_0^T{\mathbb{V}\left[h''(V_u^i)\left|\overline{Z}_u^{\,i}\right|^2\right] du}  \\
&\leq \frac{4}{N} \left(\frac{M}{m}\right)^2\left( \mathbb{E}\left[\left|V_T^1\right|^2\right]+T \mathbb{E}\left[\int_0^T{\left|\overline{f}_u^{\,1}\right|^2 du}\right]\right) + \frac{16}{N}\left(\frac{M}{m}\right)^2\left\|\overline{Z}^{\,1}\right\|^2_{\mathscr{M}^2} + \frac{T}{N}\left(\frac{\left\|h''\right\|_{\infty}}{m}\right)^2 \mathbb{E}\left[\int_0^T{\left|\overline{Z}_u^{\,1}\right|^4 du}\right]\\
&\leq \frac{8}{N} \left(\frac{M}{m}\right)^2\left( \mathbb{E}\left[\xi^2\right] + \left|\psi_T^*\right|^2+T \left|\left|f(.,0)\right|\right|^2_{\mathscr{M}^2}\right) + \frac{8}{N} \left(\frac{M}{m}\right)^2 \lambda^2 T^2 \left\|\overline{Y}^{\,1}\right\|^2_{\mathscr{S}^2} + \frac{16}{N}\left(\frac{M}{m}\right)^2\left\|\overline{Z}^{\,1}\right\|^2_{\mathscr{M}^2}  \\
&+ \frac{T^2}{N}\left(\frac{\left\|h''\right\|_{\infty}}{m}\right)^2 \underset{t\in[0,T]}{\sup} \mathbb{E}\left[\left|\overline{Z}_t^{\,1}\right|^4\right] \leq \frac{C_1}{N} \quad \text{with\,} C_1 \text{\, depending on all parameters}.
\end{align*}

\textbf{2. The general case.}\quad
 Since $h$ is $M$-Lipschitz$,$ we deduce from \eqref{HLip} that 
\begin{equation*}
	\left\|\overline{\psi}^{\,(N)}-\psi\right\|^2_{\mathscr{S}^2} \leq \left(\frac{M}{m}\right)^2 \mathbb{E}\left[\underset{t}{\sup}\, W_1^2\left(\nu_t^{(N)},\nu_t\right)\right].
\end{equation*}
The right hand side of the previous inequality can be estimated by Theorem 10.2.7 of \cite{RR98}. We will use here a better bound, proved in \cite{MRSDE} based on recent results by Fournier and Guillin in \cite{FG15}. We have:
\begin{equation*}
	\left\|\overline{\psi}^{\,(N)}-\psi\right\|^2_{\mathscr{S}^2} \leq \frac{C_2}{\sqrt{N}}
\end{equation*}
with $C_2$ depending on $p$ and all parameters.

Indeed we can apply this result, since, for all $0\leq s\leq t\leq T$, we have
\[
\overline{U}_s^{\,i}-\overline{U}_t^{\,i} = \int_s^t{f\left(u,\overline{Y}_u^{\,i}\right) du} - \int_s^t{\overline{Z}_u^{\,i} dB_u^i},
\]
and, under our assumptions, for $1\leq q\leq p$,
\begin{align*}
\mathbb{E}\left[\left|\overline{U}_t^{\,i}-\overline{U}_s^{\,i}\right|^q\right] 
&\leq C_q(m,M,T)\left[ \mathbb{E}\left[|\xi|^q\right] + \left\|f(.,0)\right\|^q_{\mathscr{M}^q} + \underset{0 \leq u\leq T}{\sup} \mathbb{E}\left[\left|\overline{Z}_u^{\,1}\right|^q\right] \right] \left|t-s\right|^{q/2}.
\end{align*}

$\bullet$ \textbf{Step 3} $(\Delta Y^i,\Delta Z^i,\Delta K)$ verifies \[ \Delta Y_t^i=\psi_T^{(N)} +  \int_t^T{\left(f^i\left(u,Y_u^i\right)-f^i\left(u,\overline{Y}_u^{\,i}\right)\right) du} -\int_t^T{\sum_{j=1}^N{\Delta Z_u^{i,j}dB_u^j}} +\,\Delta K_T-\Delta K_t \quad \forall t\in[0,T]\]

Use Ito's formula to obtain that
\begin{align*}
\mathbb{E}\left[\int_0^T{\sum_{j=1}^N{\left|\Delta Z_u^{i,j}\right|^2}du}\right] \leq&\, \mathbb{E}\left[\left|\psi_T^{(N)}\right|^2 + 2\int_0^T{\Delta Y_u^i \left(f^i\left(u,Y_u^i\right)-f^i\left(u,\overline{Y}_u^{\,i}\right)\right) du} + 2 \int_0^T{\Delta Y_u^i d\Delta K_u}\right]\\
\leq&\, \mathbb{E}\left[\left|\psi_T^{(N)}\right|^2\right] + 2\lambda T \left\|\Delta Y^i\right\|^2_{\mathscr{S}^2} + 2\sqrt{2}\, \left\|\Delta Y^i\right\|_{\mathscr{S}^2} \left(\mathbb{E}\left[\left|K_T^{(N)}\right|^2 \right]+\left|K_T\right|^2 \right)^{1/2}
\end{align*}
Since we know from Proposition~\ref{en:estsz} that
\begin{equation*}
	\left|K_T\right|^2 + \sup_{N} \mathbb{E}\left[\left|K_T^{(N)}\right|^2 \right] < +\infty,
\end{equation*}
we deduce the rate of convergence
\[ 
\left\|\Delta Z^i\right\|^2_{\mathscr{M}^2}= O\left(\left\|\Delta Y^i\right\|_{\mathscr{S}^2}\right). 
\]

Finally, let us write
\begin{align*}
\Delta K_t &= \Delta Y_0^i - \Delta Y_t^i - \int_0^t{\left(f^i\left(u,Y_u^i\right)-f^i\left(u,\overline{Y}_u^{\,i}\right)\right) \,du} + \int_0^t{\sum_{j=1}^N{\Delta Z_u^{i,j}dB_u^j}} \quad \forall t\in[0,T]\\
\end{align*}
to get
\begin{align*}
 \left\|\Delta K\right\|^2_{\mathscr{A}^2} &\leq 3\left\|\Delta Y^i\right\|^2_{\mathscr{S}^2}+ 12\left\|\Delta Z^i\right\|^2_{\mathscr{M}^2} + 3T\, \mathbb{E}\left[\int_0^T{\left|f^i\left(u,Y_u^i\right)-f^i\left(u,\overline{Y}_u^{\,i}\right)\right|^2 du}\right]\\
&\leq 3\left(1+\lambda^2 T^2\right)\left\|\Delta Y^i\right\|^2_{\mathscr{S}^2}+ 12\left\|\Delta Z^i\right\|^2_{\mathscr{M}^2} = O\left(\left\|\Delta Y^i\right\|_{\mathscr{S}^2}\right).
\end{align*}

This ends the proof of our main result.

\end{proof}

\begin{proof}[Proof of Corollary~\ref{en:CN}]
We give the elements of the proof in the case $p=2$ since it is then easy to generalize to the case $p\geq 2,$ as mentioned in \cite{ElKar2} of which Proposition 5.3 is the basis of our result.

\smallskip

\noindent Indeed$,$ we verify the hypothesis for this Proposition and since $K$ is deterministic$,$ we can extend its conclusion to our MRBSDE. Therefore$,$ $Y$ and $Z$ are Malliavin differentiable$,$ there derivatives solve
\[\begin{dcases}
D_{\theta} Y_t = D_{\theta} Z_t = 0 \qquad \qquad \forall 0\leq t<\theta \leq T  \\
D_{\theta} Y_t = D_{\theta} \xi + \int_t^T{\left[\partial_y f(u,Y_u)D_{\theta} Y_u + D_{\theta} f(u,Y_u)\right]\,du} - \int_t^T{D_{\theta} Z_u \,dB_u} \quad \forall \theta \leq t \leq T
\end{dcases}\]
and a version of $Z$ is given by $\left\{ D_t Y_t , 0\leq t\leq T \right\}$.

\medskip

It comes classically that there exists $C>0$ depending only on $T$ such that
\begin{align*}
	\underset{\theta}{\sup}\,\left\| D_{\theta} Y \right\|^2_{\mathscr{S}^2} &\leq C \left( \underset{\theta}{\sup}\,\mathbb{E}\left[\left|D_{\theta} \xi\right|^2\right] + \underset{\theta}{\sup}\,\left\| D_{\theta} f(.,Y) \right\|^2_{\mathscr{M}^2} \right) \\
&\leq C \left( \underset{\theta}{\sup}\,\mathbb{E}\left[\left|D_{\theta} \xi\right|^2\right] + 2\,\underset{\theta}{\sup}\,\left\| D_{\theta} f(.,0) \right\|^2_{\mathscr{M}^2} + 2T \left\|Y\right\|^2_{\mathscr{S}^2} \underset{\theta}{\sup}\,\left|K_{\theta}\right|^2 \right) <\infty.
\end{align*}
This inequality allows to conclude since 
\begin{equation*}
	\underset{t\in[0,T]}{\sup} \mathbb{E}\left[\left|Z_t\right|^2\right] \leq \underset{t}{\sup}\, \mathbb{E}\left[\underset{s}{\sup} \left|D_t Y_s\right|^2\right] \leq \underset{\theta}{\sup}\,\left\| D_{\theta} Y \right\|^2_{\mathscr{S}^2}.
\end{equation*}
\end{proof}

\section{The case of linear reflexion}

In this subsection, we are concerned with the case of linear reflexions. In this framework, we can deal with generators that depend both on $y$ and $z$. Our additional assumption is the following:
\begin{center}
$(\widetilde{H}_h)$ The function $h$ is given by $h(x)=ax+b$ for some $a >0$ and $b\in\mathbb{R}$.
\end{center}
 
\begin{pr}\label{en:lineu}
Let $(H_{\xi})$, $(H_f)$ and $(\widetilde{H}_h)$ hold. The reflected BSDE~\eqref{eqPS} has a unique square integrable flat solution. $\left(\left\{Y^i,Z^i\right\}_{1\leq i\leq N},K^{(N)}\right)$. Moreover,
\begin{equation*}
\sup_{N\geq 1} \mathbb{E}\left[ \left| K^{(N)}_T\right|^2 \right] < \infty.
\end{equation*}
\end{pr}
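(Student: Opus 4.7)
The plan extends the Picard argument of Proposition~\ref{en:eusz} to incorporate the $z$-variable, using in a crucial way the linear structure of $h$. Under $(\widetilde{H}_h)$, the mean constraint $\tfrac{1}{N}\sum_i h(Y^i_t)\ge 0$ reduces to the scalar constraint $\bar Y_t:=\tfrac{1}{N}\sum_i Y^i_t\ge -b/a$, and the terminal correction $\psi_T^{(N)}=(-b/a-\bar\xi)^+$ depends only on the data $\{\xi^i\}$ and is therefore frozen throughout the iteration. Averaging \eqref{eqPS} over $i$ then shows that $\bar Y$ solves a one-dimensional reflected BSDE with constant barrier $-b/a$, driver $\bar{\tilde f}_u=\tfrac{1}{N}\sum_i f^i(u,Y^i_u,Z^{i,i}_u)$, martingale coefficients $\bar Z^j:=\tfrac{1}{N}\sum_i Z^{i,j}$, and the Skorokhod condition $\int_0^T(\bar Y_t+b/a)\,dK^{(N)}_t=0$; subtracting this from the $i$-th equation gives a standard (non-reflected) BSDE for $Y^i-\bar Y$, since $K^{(N)}$ cancels.

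I would define
\[
\Gamma:\bigl(\mathscr{S}^2(\rset)\times\mathscr{M}^2(\rset)\bigr)^N\longrightarrow\bigl(\mathscr{S}^2(\rset)\times\mathscr{M}^2(\rset)\bigr)^N,\qquad (P^i,Q^i)\longmapsto(Y^i,Z^{i,i}),
\]
where $(\{Y^i,Z^i\}_i,K^{(N)})$ is the unique flat solution, provided by Theorem~\ref{CPS}, of the constant-driver system~\eqref{eqCPS} with driver $u\mapsto f^i(u,P^i_u,Q^i_u)\in\mathscr{M}^2$. To prove contraction, observe that the terminals $\bar\theta$ and $\theta^i-\bar\theta$ do not depend on the iteration, so $\Delta\bar\theta=\Delta(\theta^i-\bar\theta)=0$. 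The two Skorokhod conditions imply $\int_0^T\Delta\bar Y\,d\Delta K^{(N)}\le 0$, so Itô's formula on $|\Delta\bar Y|^2$ yields the classical reflected BSDE estimate
\[
\|\Delta\bar Y\|_{\mathscr{S}^2}^2+\|\Delta\bar Z\|_{\mathscr{M}^2}^2+\e\bigl[|\Delta K^{(N)}_T|^2\bigr]\ \le\ C(T)\,\|\Delta\bar{\tilde f}\|_{\mathscr{M}^2}^2\ \le\ \tfrac{C(T)\lambda^2}{N}\sum_j\bigl(\|\Delta P^j\|_{\mathscr{M}^2}^2+\|\Delta Q^j\|_{\mathscr{M}^2}^2\bigr),
\]
while the classical $L^2$-estimate for standard BSDEs applied to $\Delta(Y^i-\bar Y)$ gives a comparable bound for $\|\Delta(Y^i-\bar Y)\|_{\mathscr{S}^2}^2+\sum_j\|\Delta(Z^{i,j}-\bar Z^j)\|_{\mathscr{M}^2}^2$ in terms of $\|\Delta(\tilde f^i-\bar{\tilde f})\|_{\mathscr{M}^2}^2\le C\lambda^2\bigl(\|\Delta P^i\|_{\mathscr{M}^2}^2+\|\Delta Q^i\|_{\mathscr{M}^2}^2+\tfrac{1}{N}\sum_j\|\Delta P^j\|^2+\|\Delta Q^j\|^2\bigr)$. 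Summing and using $\|\Delta\bar Z^i\|_{\mathscr{M}^2}^2\le\|\Delta\bar Z\|_{\mathscr{M}^2}^2$ gives
\[
\tfrac{1}{N}\sum_i\bigl(\|\Delta Y^i\|_{\mathscr{S}^2}^2+\|\Delta Z^{i,i}\|_{\mathscr{M}^2}^2\bigr)\ \le\ C(T)\lambda^2\,\tfrac{1}{N}\sum_i\bigl(\|\Delta P^i\|_{\mathscr{M}^2}^2+\|\Delta Q^i\|_{\mathscr{M}^2}^2\bigr),
\]
a strict contraction for $T$ small, which yields existence and uniqueness of $(Y^i,Z^{i,i})$; the remaining $Z^{i,j}$ for $j\ne i$ and $K^{(N)}$ are then determined by Theorem~\ref{CPS}. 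Global $T$ follows by concatenation on a subdivision, exactly as in Proposition~\ref{en:eusz}.

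The uniform $L^2$-bound on $K^{(N)}_T$ follows by applying Proposition~\ref{APEst} at the fixed point, with the driver frozen to $u\mapsto f^i(u,Y^i_u,Z^{i,i}_u)\in\mathscr{M}^2$:
\[
\|Y^i\|_{\mathscr{S}^2}^2+\|Z^i\|_{\mathscr{M}^2}^2+\|K^{(N)}\|_{\mathscr{A}^2}^2\ \le\ C\Bigl(1+\e[\xi^2]+\|f^i(\cdot,Y^i,Z^{i,i})\|_{\mathscr{M}^2}^2\Bigr),
\]
with $C$ independent of $N$; using $(H_f)$ to dominate $\|f^i(\cdot,Y^i,Z^{i,i})\|_{\mathscr{M}^2}^2\le 2\|f(\cdot,0,0)\|_{\mathscr{M}^2}^2+2\lambda^2\bigl(T\|Y^i\|_{\mathscr{S}^2}^2+\|Z^{i,i}\|_{\mathscr{M}^2}^2\bigr)$ and absorbing these contributions on a sufficiently fine subdivision yields the claimed $N$-uniform bound. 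The main obstacle I anticipate is the $\Delta Z^{i,i}$ estimate in the contraction step: without $(\widetilde{H}_h)$ one would be forced to control $\e[|\Delta K^{(N)}_T|^2]$ through a Snell envelope stability argument in a non-convex domain, which is quite delicate; the linearity of $h$ sidesteps this by reducing the reflection to a scalar constant-barrier problem in which the Skorokhod cross-term cooperates.
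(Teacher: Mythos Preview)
Your existence/uniqueness argument is sound in spirit and genuinely different from the paper's. The paper simply observes that under $(\widetilde H_h)$ the constraint set $\{y\in\rset^N:\sum_i h(y^i)\ge 0\}$ is a half-space, hence convex, and invokes the convex-domain reflected BSDE theory of G\'egout-Petit and Pardoux \cite{GPP96}. Your averaging decomposition $Y^i=\bar Y+(Y^i-\bar Y)$ is a nice self-contained alternative; just note that the contraction for the $Z$-component does not become strict merely by taking $T$ small---you need a weighted norm $e^{\beta t}$ (as in the classical Pardoux--Peng argument) rather than a subdivision. This is a routine fix.

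The real gap is in your derivation of the uniform bound on $\e\bigl[|K^{(N)}_T|^2\bigr]$. Freezing the driver at the fixed point and invoking Proposition~\ref{APEst} gives
\[
\|Y^i\|_{\mathscr{S}^2}^2+\|Z^i\|_{\mathscr{M}^2}^2+\|K^{(N)}\|_{\mathscr{A}^2}^2\ \le\ C\Bigl(1+\e[\xi^2]+\|f^i(\cdot,Y^i,Z^{i,i})\|_{\mathscr{M}^2}^2\Bigr),
\]
but when you expand the right-hand side via $(H_f)$ you pick up $C\lambda^2\|Z^{i,i}\|_{\mathscr{M}^2}^2$ with a constant $C$ that does \emph{not} tend to zero on short intervals (trace through the proof of Proposition~\ref{APEst}: the coefficient of $\|f\|_{\mathscr{M}^2}^2$ in the $Z$-estimate stays bounded away from $0$ as $T\to 0$). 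So ``absorbing on a fine subdivision'' does not close the loop unless $\lambda$ happens to be small. The paper avoids this entirely: it applies It\^o to $e^{\alpha t}|Y^i_t+b/a|^2$, so that the reflection term becomes $\tfrac{2}{a}\int e^{\alpha u}h(Y^i_u)\,dK^{(N)}_u$; summing over $i$ and using the Skorokhod condition annihilates it, and the exponential weight with $\alpha=\tfrac{3}{2}+2\lambda+3\lambda^2$ absorbs the $(y,z)$-Lipschitz contributions directly. You already isolated exactly this Skorokhod identity in your discussion of $\bar Y$, so the fix is simply to deploy it here, on $e^{\alpha t}|Y^i_t+b/a|^2$, rather than routing through Proposition~\ref{APEst}.
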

 
\begin{proof}
The existence and uniqueness of $\left(\left\{Y^i,Z^i\right\}_{1\leq i\leq N},K^{(N)}\right)$ solution to~\eqref{eqPS} result from \cite{GPP96} since 
\begin{equation*}
	\left\{y\in\mathbb{R}^N : \sum_{i=1}^N h(y^i) \geq 0\right\}
\end{equation*} 
is a convex set in $\mathbb{R}^N$.
	
For $\alpha=\dfrac{3}{2}+2\lambda+3\lambda^2$, we get from Itô's formula, for $1\leq i\leq N$, noting $c=b/a$,
\begin{multline*}
\mathbb{E}\left[e^{\alpha t} \left|Y^i_t+c\right|^2 + \frac{1}{2} \int_t^T{e^{\alpha u} \left(\left|Y^i_u+c\right|^2 + \left|Z^i_u\right|^2\right) du} \right] \\
	\leq \mathbb{E}\left[e^{\alpha T} \left|\xi^i+\psi^{(N)}_T+c\right|^2 + \int_0^T{e^{\alpha u}\left|f^i(u,-c,0)\right|^2 du} +2 \int_t^T{e^{\alpha u} \left(Y^i_u+c\right) dK^{(N)}_u} \right]\\
		 = \mathbb{E}\left[e^{\alpha T} \left|\xi^i+\psi^{(N)}_T+c\right|^2 + \int_0^T{e^{\alpha u}\left|f^i(u,-c,0)\right|^2 du} + \frac{2}{a} \int_t^T{e^{\alpha u} h(Y^i_u) dK^{(N)}_u} \right].
\end{multline*}
Summing these inequalities and using the Skorokhod condition, we get the estimate, since $\{\xi^i\}_{1\leq i\leq N}$ and $\{f^i(.,0,0)\}_{1\leq i\leq N}$ are IID copies of $\xi$ and $f(.,0,0)$,
\begin{equation*}
\sup_{0\leq t\leq T} \frac{1}{N}\sum_{i=1}^N{\mathbb{E}\left[\left|Y^i_t+c\right|^2 + \int_t^T{\left(\left|Y^i_u+c\right|^2 + \left|Z^i_u\right|^2\right) du}\right]} \leq C \left(1+\mathbb{E}\left[|\xi|^2+\left|\psi^{(N)}_T\right|^2+ \int_0^T |f(u,0,0)|^2 du\right]\right).
\end{equation*}
Let us observe that
\begin{equation*}
\psi^{(N)}_T = \left(\frac{1}{N}\sum_{i=1}^N \xi^i + \frac{b}{a}\right)^{-},
\end{equation*}
to get, using again the fact that $\{\xi^i\}_{1\leq i\leq N}$  are IID copies of $\xi$,
\begin{equation*}
\sup_{0\leq t\leq T} \frac{1}{N}\sum_{i=1}^N{\mathbb{E}\left[\left|Y^i_t+c\right|^2 + \int_t^T{\left(\left|Y^i_u+c\right|^2 + \left|Z^i_u\right|^2\right) du}\right]} \leq C \left(1+\mathbb{E}\left[|\xi|^2+ \int_0^T{|f(u,0,0)|^2 du}\right]\right).
\end{equation*}

For each $i$, we have
\begin{equation*}
K^{(N)}_T = Y^i_0 - Y_T^i - \int_0^T{f^i\left(u,Y^i_u,Z^i_u\right)\, du} + \int_0^T{\sum_{j=1}^N{Z^{i,j}_u dB^{j}_u}}
\end{equation*}
from which we deduce that, since $\{f^i(.,0,0)\}_{1\leq i\leq N}$ are IID copies of $f(.,0,0)$,
\begin{equation*}
\mathbb{E}\left[\left|K^{(N)}_T\right|^2\right] \leq C \mathbb{E}\left[\left|Y^i_0\right|^2 + \left|Y^i_T\right|^2+ \int_0^T{\left|Z^i_u\right|^2 du} + \int_0^T{\left|f(u,0,0)\right|^2 du}\right].
\end{equation*}
The result follows by taking the arithmetic mean over $i$ of these inequalities.
\end{proof}
 
\begin{rem}
It follows from the construction in Section \ref{Tpswcd}, that $Y^i_t = U^i_t + S_t$ where
\begin{equation*}
U^i_t = \mathbb{E}\left[\xi^i + \int_t^T{f^i\left(u,Y^i_u,Z^{i,i}_u\right) du} \,\middle|\, \mathscr{F}_t^{(N)} \right]
\end{equation*}
and $S_t$ is the Snell envelope of
\begin{equation*}
\psi^{(N)}_t = \left(\frac{1}{N} \sum_{i=1}^N{U^i_t} + \frac{b}{a}\right)^-.
\end{equation*}
\end{rem}
 
As before, let us consider $(\overline{Y}^{\,i}, \widehat Z^i, K)$ independent copies of $(Y,Z,K)$ i.e.
\begin{equation*}
\overline{Y}_t^{\,i}=\xi^i + \int_t^T{f^i(u,\overline{Y}_u^{\,i},\widehat{Z}_u^{\,i})\,du} - \int_t^T{\widehat{Z}_u^{\,i} \,dB_u^i}+K_T-K_t,
\end{equation*}
with $\mathbb{E}\left[h(\overline{Y}^{\,i}_t)\right] \geq 0$ and the Skorokhod condition. Let us set $\overline{Z}^{\,i,j} = \widehat Z^i \mathds{1}_{i=j}$. With this notation, the previous equation rewrites
\begin{equation*}
\overline{Y}_t^{\,i}=\xi^i + \int_t^T{f^i(u,\overline{Y}_u^{\,i},\overline{Z}_u^{\,i,i})\,du} - \int_t^T{ \sum_{i=1}^N \overline{Z}_u^{\,i,j} \,dB_u^j}+K_T-K_t.
\end{equation*}
and $\overline{Y}_t^{\,i} = \overline{U}_t^{\,i} + R_t$ where
\begin{equation*}
\overline{U}_t^{\,i}= \mathbb{E}\left[ \xi^i + \int_t^T{f^i(u,\overline{Y}_u^{\,i},\overline{Z}_u^{\,i,i}) \,du} \,\middle|\, \mathscr{F}^{(N)}_t \right] \; \text{are IID random variables}
\end{equation*}
and $R_t$ is the Snell envelope of
\begin{equation*}
\psi_t=\left( \mathbb{E}\left[\overline{U}_t^{\,i}\right]+\frac{b}{a} \right)^-.
\end{equation*}
 
\begin{thm}
Let us denote $\Delta Y^i:=Y^i-\overline{Y}^{\,i}$, $\Delta Z^i := Z^i-\overline{Z}^{\,i}$ and $\Delta K:= K^{(N)}-K$. Then, there exists a constant independent of $N$, such that, for $1\leq i\leq N$,
\begin{equation*}
\mathbb{E}\left[\sup_{0\leq t\leq T} \left|\Delta Y^i_t\right|^2 + \int_0^T{\left|\Delta Z^i_u\right|^2 du} + \sup_{0\leq t\leq T} \left|\Delta K_t\right|^2\right] \leq \frac{C}{\sqrt N}.
\end{equation*}
\end{thm}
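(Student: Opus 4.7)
The plan is to derive the estimate through an Itô energy argument on $e^{\beta t}|\Delta Y^i_t|^2$, controlling the Skorokhod cross-terms via a law-of-large-numbers estimate made possible by the linearity of $h$. Throughout I write $\mu_t = \e[\overline{U}^{\,1}_t]$, $\bar Y_t = \frac{1}{N}\sum_i Y^i_t$, $\bar Y^{\,(N)}_t = \frac{1}{N}\sum_i \overline{Y}^{\,i}_t$, and I introduce the intermediate process $\overline{\psi}^{\,(N)}_t = \bigl(\frac{1}{N}\sum_i \overline{U}^{\,i}_t + b/a\bigr)^-$.

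My first step is a law-of-large-numbers estimate. Because $x\mapsto x^-$ is $1$-Lipschitz and $\e[\xi] + b/a\geq 0$, one has the pointwise bounds $|\overline{\psi}^{\,(N)}_t - \psi_t| \leq |\bar Y^{\,(N)}_t - \mu_t|$ and $|\psi^{(N)}_T| \leq \bigl|\frac{1}{N}\sum_i(\xi^i - \e[\xi])\bigr|$. The $\overline{U}^{\,i}$'s are IID semimartingales whose martingale part is $\int\widehat Z^{\,i}\,dB^i$ and whose drift is Lipschitz; Burkholder--Davis--Gundy applied to $\frac{1}{N}\sum_i\int\widehat Z^{\,i}\,dB^i$ together with a trivial bound on the averaged drift yields $\e\bigl[\sup_{t}|\bar Y^{\,(N)}_t - \mu_t|^2\bigr] \leq C/N$ and $\e[|\psi^{(N)}_T|^2] \leq C/N$, with constants independent of $N$ thanks to the a priori bounds in Proposition~\ref{en:lineu}.

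The second step is the energy estimate. Apply Itô's formula to $e^{\beta t}|\Delta Y^i_t|^2$, choose $\beta$ large enough so that the Lipschitz terms in $(y,z)$ can be absorbed into the $|\Delta Y^i|^2$ and $|\Delta Z^i|^2$ coefficients via Young's inequality, and average over $i$. All terms are routine except the averaged Skorokhod cross-term
\[
\frac{1}{N}\sum_{i=1}^N \int_t^T e^{\beta u}\Delta Y^i_u\,d\Delta K_u = \int_t^T e^{\beta u}(\bar Y_u - \bar Y^{\,(N)}_u)(dK^{(N)}_u - dK_u).
\]
Using the Skorokhod identities $\bar Y_u\,dK^{(N)}_u = -(b/a)\,dK^{(N)}_u$ and $\mu_u\,dK_u = -(b/a)\,dK_u$ together with the non-negativity of $\bar Y_u + b/a$ (particle constraint) and $\mu_u + b/a$ (limit constraint), I collapse the right-hand side into an upper bound by $e^{\beta T}\sup_u|\bar Y^{\,(N)}_u - \mu_u|\cdot(K^{(N)}_T + K_T)$ up to non-positive terms. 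Taking expectation, applying Cauchy--Schwarz, using the uniform-in-$N$ bound on $\|K^{(N)}\|_{\mathscr{A}^2}$ from Proposition~\ref{en:lineu}, and plugging in the $O(1/N)$ LLN estimate of Step~1 give $\frac{1}{N}\sum_i\e\bigl[\sup_t|\Delta Y^i_t|^2 + \int_0^T|\Delta Z^i_u|^2\,du\bigr] \leq C/\sqrt{N}$. Exchangeability of the particles then gives the per-particle rate.

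For $\Delta K$ I use the identity $\Delta K_t = \Delta Y^i_0 - \Delta Y^i_t + \int_0^t[f^i(u,\overline Y^{\,i},\overline Z^{\,i,i}) - f^i(u,Y^i,Z^{i,i})]\,du + \int_0^t\sum_j\Delta Z^{i,j}_u\,dB^j_u$, take $L^2$-sup and apply BDG; each summand is $O(N^{-1/2})$ by the preceding step. The main obstacle is Step~2: the collapse of $\int(\bar Y - \bar Y^{\,(N)})(dK^{(N)} - dK)$ into a pure LLN fluctuation relies essentially on both Skorokhod conditions producing $-b/a$ times the respective measure, which would fail for a nonlinear $h$. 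Once this cross-term is tamed, the other pieces (Itô, Young, BDG, exchangeability) are standard.
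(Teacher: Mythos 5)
Your Steps 1 and 2 reproduce, in slightly different packaging, the paper's treatment of the time-integrated quantities: the collapse of the averaged Skorokhod cross-term via the linearity of $h$ into (a fluctuation of the empirical mean of the $\overline{U}^{\,i}$'s) $\times\,(K^{(N)}_T+K_T)$, the uniform $L^2$ bound on $K^{(N)}_T$, and the $O(1/N)$ law-of-large-numbers estimates (including $\e[|\psi^{(N)}_T|^2]\leq C/N$) are exactly the paper's Steps 1--2, and that part is sound (modulo a notational slip: the relevant fluctuation is $\frac{1}{N}\sum_i\bigl(\overline{U}^{\,i}_t-\e[\overline{U}^{\,i}_t]\bigr)$, not $\frac{1}{N}\sum_i\overline{Y}^{\,i}_t-\e[\overline{U}^{\,1}_t]$, and the identity $\mu_u\,dK_u=-(b/a)\,dK_u$ needs $\mu_u=\e[\overline{Y}^{\,1}_u]$).

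The genuine gap is the claim that this energy argument delivers $\frac{1}{N}\sum_i\e\bigl[\sup_t|\Delta Y^i_t|^2\bigr]\leq C/\sqrt N$. For a single particle, the Skorokhod term inside the pathwise supremum, $\sup_t\bigl|\int_t^Te^{\beta u}\Delta Y^i_u\,d\Delta K_u\bigr|$, can only be bounded by $e^{\beta T}\sup_u|\Delta Y^i_u|\,(K^{(N)}_T+K_T)$, and $K^{(N)}_T+K_T$ is merely $O(1)$, so Young or Cauchy--Schwarz yields boundedness but no rate. If instead you average over $i$ \emph{before} taking the supremum in $t$ (which is what makes the cross-term collapse), what you control is $\e\bigl[\sup_t\frac{1}{N}\sum_i|\Delta Y^i_t|^2\bigr]$; exchangeability identifies $\frac{1}{N}\sum_i\e[\sup_t|\Delta Y^i_t|^2]$ with $\e[\sup_t|\Delta Y^1_t|^2]$, but $\e\bigl[\sup_t\frac{1}{N}\sum_i|\cdot|^2\bigr]$ is only a lower bound for that quantity (the particles' maxima need not occur at the same time), so the per-particle $\mathscr{S}^2$ rate does not follow. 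The paper closes this by a separate argument (its Step 3): the Snell envelope decompositions $Y^i_t=U^i_t+S_t$ and $\overline{Y}^{\,i}_t=\overline{U}^{\,i}_t+R_t$ give $|\Delta Y^i_t|\leq|U^i_t-\overline{U}^{\,i}_t|+|S_t-R_t|$ with $|S_t-R_t|\leq\e\bigl[\sup_s|\psi^{(N)}_s-\psi_s|\,\big|\,\mathscr{F}^{(N)}_t\bigr]$ and $|U^i_t-\overline{U}^{\,i}_t|\leq\lambda\,\e\bigl[\int_0^T(|\Delta Y^i_u|+|\Delta Z^i_u|)\,du\,\big|\,\mathscr{F}^{(N)}_t\bigr]$; Doob's maximal inequality then converts the integrated estimates of your Step 2 into the $\mathscr{S}^2$ bound. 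You need some such additional device. Note also that your final $\Delta K$ estimate uses $\e[\sup_t|\Delta Y^i_t|^2]$ and so inherits this gap (it could alternatively be repaired by averaging the representation of $\Delta K$ over $i$, but the theorem still asserts the per-particle $\mathscr{S}^2$ rate for $\Delta Y^i$).
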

 
\begin{proof}
For all $1\leq i\leq N$, the triple $(\Delta Y^i,\Delta Z^i,\Delta K)$ solves the BSDE on $[0,T]$
\begin{equation*}
\Delta Y_t^i=\psi_T^{(N)} +  \int_t^T{\left(f^i\left(u,Y_u^i,Z_u^{i,i}\right)-f^i\left(u,\overline{Y}_u^{\,i},\overline{Z}_u^{\,i,i}\right)\right) du} -\int_t^T{\sum_{j=1}^N{\Delta Z_u^{i,j}dB_u^j}} +\,\Delta K_T-\Delta K_t.
\end{equation*}

$\bullet$ \textbf{Step 1.}\quad For $\alpha\geq 2\lambda^2+2\lambda+\dfrac{1}{2}$, we get from Itô's formula, for $1\leq i\leq N$,
\begin{align*}
\frac{1}{2} \, \mathbb{E}\left[ \int_0^T{e^{\alpha u} \left(\left|\Delta Y^i_u\right|^2 + \left|\Delta Z^i_u\right|^2\right) du} \right] & \leq \mathbb{E}\left[e^{\alpha T}\left|\psi^{(N)}_T\right|^2 +2 \int_0^T{ e^{\alpha u} \Delta Y^i_u d\Delta K_u} \right], \\
& = \mathbb{E}\left[e^{\alpha T}\left|\psi^{(N)}_T\right|^2 + \frac{2}{a} \int_0^T{e^{\alpha u} \left(h\left(Y^i_u\right) - h\left(\overline{Y}^{\,i}_u\right)\right) d\Delta K_u} \right].
\end{align*}
Summing these inequalities and using the Skorokhod conditions together with constraint, we get
\begin{equation*}
\frac{1}{N} \sum_{i=1}^N{\mathbb{E}\left[\int_0^T{\left( \left|\Delta Y_u^i\right|^2 + \left|\Delta Z_u^i\right|^2\right) du}\right]}
\leq 2e^{\alpha T}\mathbb{E}\left[\left|\psi_T^{(N)}\right|^2\right] + \frac{4}{aN}\sum_{i=1}^N{\mathbb{E}\left[\int_0^T{e^{\alpha u}\left(-h\left(\overline{Y}_u^{\,i}\right)\right)dK^{(N)}_u}\right]},
\end{equation*}
and since $\mathbb{E}\left[h\left(\overline{Y}^{\,i}_t\right)\right]$ is nonnegative,
\begin{multline*}
\frac{1}{N} \sum_{i=1}^N{\mathbb{E}\left[\int_0^T{\left( \left|\Delta Y_u^i\right|^2 + \left|\Delta Z_u^i\right|^2\right) du}\right]} \\
\leq 2e^{\alpha T}\mathbb{E}\left[\left|\psi_T^{(N)}\right|^2\right] + \frac{4}{aN}\sum_{i=1}^N{\mathbb{E}\left[\int_0^T{e^{\alpha u}\left(\mathbb{E}\left[ h\left(\overline{Y}_u^{\,i}\right) \right] -h\left(\overline{Y}_u^{\,i}\right)\right) dK_u^{(N)}}\right]} \\
	  = 2e^{\alpha T}\mathbb{E}\left[\left|\psi_T^{(N)}\right|^2\right] + 4\mathbb{E}\left[\int_0^T{e^{\alpha u}\, \frac{1}{N}\sum_{i=1}^N{\left\{\mathbb{E}\left[\overline{U}_u^{\,i}\right] -\overline{U}_u^{\,i}\right\}} \,dK_u^{(N)}}\right]\\
	 \leq 2e^{\alpha T}\mathbb{E}\left[\left|\psi_T^{(N)}\right|^2\right] + 4e^{\alpha T} \mathbb{E}\left[\left|K_T^{(N)}\right|^2\right]^{1/2} \mathbb{E}\left[\underset{t}{\sup}\left|\frac{1}{N}\sum_{i=1}^N{\left\{\overline{U}_t^{\,i} - \mathbb{E}\left[\overline{U}_t^{\,i}\right] \right\}}\right|^2\right]^{1/2}.
\end{multline*}

But it follows from Proposition~\ref{en:lineu}, that $\mathbb{E}\left[\left|K_T^{(N)}\right|^2\right]$ is bounded uniformly in $N,$ and we have
\begin{multline*}
\frac{1}{N}\sum_{i=1}^N{\left\{\overline{U}_t^{\,i} - \mathbb{E}\left[\overline{U}_t^{\,i}\right] \right\}}  \\
= \frac{1}{N}\sum_{i=1}^N{\left\{\overline{U}_T^{\,i} - \mathbb{E}\left[\overline{U}_T^{\,i}\right] \right\}} + \int_t^T{ \frac{1}{N}\sum_{i=1}^N{\left\{\overline{f}_u^{\,i} - \mathbb{E}\left[\overline{f}_u^{\,i}\right] \right\}} \,du} - \frac{1}{N}\int_t^T{\sum_{i=1}^N{\overline{Z}_u^{\,i,i}} dB_u^i}
\end{multline*} 
which implies
\begin{align*}
\mathbb{E}\left[\underset{t}{\sup}\left|\frac{1}{N}\sum_{i=1}^N{\left\{\overline{U}_t^{\,i} - \mathbb{E}\left[\overline{U}_t^{\,i}\right] \right\}}\right|^2\right] &\leq \frac{3}{N}\mathbb{V}\left[\overline{U}_T^{\,1}\right] + \frac{3T}{N}\int_0^T{\mathbb{V}\left[\overline{f}_u^{\,1}\right]du} + \frac{12}{N}\mathbb{E}\left[\int_0^T{\left|\overline{Z}_u^{\,1}\right|^2du}\right]\\
&\leq \frac{3}{N} \mathbb{E}\left[\xi^2\right] + \frac{3T}{N}\mathbb{E}\left[\int_0^T{\left|f(u,Y_u,Z_u)\right|^2 du}\right] + \frac{12}{N}\left\|Z\right\|^2_{\mathscr{M}^2}.
\end{align*}
It follows that, for some constant $C$ independent of $N$,
\begin{equation*}
\frac{1}{N} \sum_{i=1}^N{\mathbb{E}\left[\int_0^T{\left( \left|\Delta Y_u^i\right|^2 + \left|\Delta Z_u^i\right|^2\right) du}\right]}
\leq 2e^{\alpha T}\mathbb{E}\left[\left|\psi_T^{(N)}\right|^2\right] + \frac{C}{\sqrt{N}}.
\end{equation*}

Due to the symmetry of $K^{(N)}$ with respect to the particles, for any permutation $\sigma$ of $\{1,\ldots, N\}$, the law of $\left(\Delta Y^i, \left\{\Delta Z^{i,j}\right\}_{1\leq j\leq N}\right)$ is the same as the law of $\left(\Delta Y^{\sigma(i)}, \left\{\Delta Z^{\sigma(i),\sigma(j)}\right\}_{1\leq j\leq N}\right)$. In particular, the law of $\left(\Delta Y^i, \left|\Delta Z^i\right|\right)$ is independent of $i$. Thus it follows,
\begin{equation*}
\mathbb{E}\left[\int_0^T{\left(\left|\Delta Y^i_u\right|^2 + \left|\Delta Z^i_u\right|^2\right) du}\right] \leq 2e^{\alpha T}\mathbb{E}\left[\left|\psi_T^{(N)}\right|^2\right] + \frac{C}{\sqrt{N}}.
\end{equation*}

$\bullet$ \textbf{Step 2.}\quad  
Introduce
\begin{equation*}
\overline{\psi}_t^{\,(N)} = \left( \frac{1}{N}\sum_{i=1}^N{\overline{U}_t^{\,i}}+\frac{b}{a} \right)^-, \quad 0\leq t\leq T.
\end{equation*}
Notice that, since $U_T^i = \overline{U}_T^{\,i}=\xi^i$, $\psi_T^{(N)} = \overline{\psi}_T^{\,(N)}$ which can be rewritten as $\overline{\psi}_T^{\,(N)} - \psi_T$ since $\mathbb{E}[h(\xi)]\geq 0$.

We obtain, with classical results on IID random variables, 
\begin{align*}
\left\|\overline{\psi}^{\,(N)}-\psi\right\|^2_{\mathscr{S}^2} &\leq \frac{12}{N} \mathbb{E}\left[\xi^2\right] + \frac{15T}{N}\mathbb{E}\left[\int_0^T{\left|f(u,Y_u,Z_u)\right|^2\,du}\right].
\end{align*}
It follows that
\begin{equation*}
\mathbb{E}\left[\int_0^T{\left(\left|\Delta Y^i_u\right|^2 + \left|\Delta Z^i_u\right|^2\right) du}\right] \leq \frac{C}{\sqrt N}.
\end{equation*}

$\bullet$ \textbf{Step 3.}\quad 
Let us recall that, the processes $Y^i$ and $\overline{Y}^{\,i}$ are given by
\begin{align*}
Y_t^i &= U_t^i + S_t \text{\, with \,} S_t=\underset{\tau \text{\,s.t\,}\geq t}{\esssup\,}\mathbb{E}\left[\psi_{\tau}^{(N)}\,\middle|\,\mathscr{F}_t^{(N)}\right],\\
\overline{Y}_t^{\,i} &= \overline{U}_t^{\,i} + R_t \text{\, with \,} R_t=\underset{s\geq t}{\sup}\,\psi_s= \underset{\tau \text{\,s.t\,}\geq t}{\esssup\,}\mathbb{E}\left[\psi_{\tau}\,\middle|\,\mathscr{F}_t^{(N)}\right].
\end{align*}
and remark that
\begin{equation*}
\left|S_t-R_t\right|\leq \underset{\tau \text{\,s.t\,}\geq t}{\esssup\,}\mathbb{E}\left[\left|\psi_{\tau}-\psi_{\tau}^{(N)}\right|\,\middle|\,\mathscr{F}_t^{(N)}\right]\leq \mathbb{E}\left[\underset{s\geq t}{\sup}\,\left|\psi_s-\psi_s^{(N)}\right|\,\middle|\,\mathscr{F}_t^{(N)}\right].
\end{equation*}

It follows from the previous inequality that
\begin{align*}
\left\|\Delta Y^i\right\|^2_{\mathscr{S}^2} &\leq 2\left\|U^i - \overline{U}^{\,i}\right\|^2_{\mathscr{S}^2} + 2\left\|S-R\right\|^2_{\mathscr{S}^2} \leq 2\left\|U^i - \overline{U}^{\,i}\right\|^2_{\mathscr{S}^2} + 8\left\|\psi^{(N)}-\psi\right\|^2_{\mathscr{S}^2}\\
&\leq 2\left\|U^i - \overline{U}^{\,i}\right\|^2_{\mathscr{S}^2} + 16\left\|\psi^{(N)}-\overline{\psi}^{\,(N)}\right\|^2_{\mathscr{S}^2} + 16\left\|\overline{\psi}^{\,(N)}-\psi\right\|^2_{\mathscr{S}^2}.
\end{align*}
Using \eqref{eq:Llip}, since the particles are exchangeable, we obtain
\begin{align*}
\left\|\Delta Y^i\right\|^2_{\mathscr{S}^2} &\leq 18\left\|U^i - \overline{U}^{\,i}\right\|^2_{\mathscr{S}^2} + 16\left\|\overline{\psi}^{\,(N)}-\psi\right\|^2_{\mathscr{S}^2}, \\
&\leq 72 \,T\, \mathbb{E}\left[\int_0^T{\left|f^i(u,Y_u^i,Z_u^{i,i})-f^i(u,\overline{Y}_u^{\,i},\overline{Z}_u^{\,i})\right|^2 du}\right] + 16\left\|\overline{\psi}^{\,(N)}-\psi\right\|^2_{\mathscr{S}^2} \\
&\leq 144\lambda^2 T \mathbb{E}\left[\int_0^T{\left(\left|\Delta Y^i_u\right|^2 + \left|\Delta Z^i_u\right|^2\right) du}\right] + 16\left\|\overline{\psi}^{\,(N)}-\psi\right\|^2_{\mathscr{S}^2}
\end{align*}
and finally $\displaystyle \left\|\Delta Y^i\right\|^2_{\mathscr{S}^2} + \left\|\Delta Z^i \right\|^2_{\mathscr{M}^2} \leq \dfrac{C}{\sqrt{N}}$.

\bigskip

$\bullet$ \textbf{Step 4.}\quad
Finally, let us write
\[
\Delta K_t = \Delta Y_0^i - \Delta Y_t^i - \int_0^t{f^i(u,Y_u^i,Z_u^{i,i})-f^i(u,\overline{Y}_u^{\,i},\overline{Z}_u^{\,i,i})\,du} + \int_0^t{\sum_{j=1}^N{\Delta Z_u^{i,j}dB_u^j}}, \quad 0\leq t\leq T,
\]
to get
\begin{align*}
\mathbb{E}\left[\sup_{0\leq t\leq T}\left|\Delta K_t\right|^2\right] &\leq 3\left\|\Delta Y^i\right\|^2_{\mathscr{S}^2}+ 12\left\|\Delta Z^i\right\|^2_{\mathscr{M}^2} + 3T\, \mathbb{E}\left[\int_0^T{\left|f^i(u,Y_u^i,Z_u^{i,i})-f^i(u,\overline{Y}_u^{\,i},\overline{Z}_u^{\,i})\right|^2 du}\right],
\end{align*}
from which we deduce
\begin{equation*}
	\mathbb{E}\left[\sup_{0\leq t\leq T}\left|\Delta K_t\right|^2\right] \leq \frac{C}{\sqrt N}.
\end{equation*}

 \end{proof}


%

\begin{thebibliography}{10}

\bibitem{Rm1}
Philippe Artzner, Freddy Delbaen, Jean-Marc Eber, and David Heath.
\newblock Coherent measures of risk.
\newblock {\em Math. Finance}, 9(3):203--228, 1999.

\bibitem{MRSDE}
P.~{Briand}, P.-{\'E}. {Chaudru de Raynal}, A.~{Guillin}, and C.~{Labart}.
\newblock {Particles Systems and Numerical Schemes for Mean Reflected
  Stochastic Differential Equations}.
\newblock {\em ArXiv e-prints}, 2016.

\bibitem{MRBSDE}
P.~{Briand}, R.~{Elie}, and Y.~{Hu}.
\newblock {BSDEs with mean reflection}.
\newblock {\em Ann. Appl. Probab. to appear. ArXiv e-prints}, 2016.

\bibitem{ElKar}
N.~El~Karoui, C.~Kapoudjian, E.~Pardoux, S.~Peng, and M.~C. Quenez.
\newblock Reflected solutions of backward {SDE}'s, and related obstacle
  problems for {PDE}'s.
\newblock {\em Ann. Probab.}, 25(2):702--737, 1997.

\bibitem{ElKar2}
N.~El~Karoui, S.~Peng, and M.~C. Quenez.
\newblock Backward stochastic differential equations in finance.
\newblock {\em Math. Finance}, 7(1):1--71, 1997.

\bibitem{Rm2}
Hans F\"ollmer and Alexander Schied.
\newblock Convex measures of risk and trading constraints.
\newblock {\em Finance Stoch.}, 6(4):429--447, 2002.

\bibitem{FG15}
Nicolas Fournier and Arnaud Guillin.
\newblock On the rate of convergence in {W}asserstein distance of the empirical
  measure.
\newblock {\em Probab. Theory Related Fields}, 162(3-4):707--738, 2015.

\bibitem{GPP96}
A.~G{\'e}gout-Petit and E.~Pardoux.
\newblock \'{E}quations diff\'erentielles stochastiques r\'etrogrades
  r\'efl\'echies dans un convexe.
\newblock {\em Stochastics Stochastics Rep.}, 57(1-2):111--128, 1996.

\bibitem{HT10}
Ying Hu and Shanjian Tang.
\newblock Multi-dimensional {BSDE} with oblique reflection and optimal
  switching.
\newblock {\em Probab. Theory Related Fields}, 147(1-2):89--121, 2010.

\bibitem{PP90}
{\'E}.~Pardoux and S.~G. Peng.
\newblock Adapted solution of a backward stochastic differential equation.
\newblock {\em Systems Control Lett.}, 14(1):55--61, 1990.

\bibitem{RR98}
Svetlozar~T. Rachev and Ludger R\"uschendorf.
\newblock {\em Mass transportation problems. {V}ol. {I}}.
\newblock Probability and its Applications (New York). Springer-Verlag, New
  York, 1998.
\newblock Theory.

\bibitem{Chaos}
Alain-Sol Sznitman.
\newblock Topics in propagation of chaos.
\newblock In {\em \'Ecole d'\'Et\'e de {P}robabilit\'es de {S}aint-{F}lour
  {XIX}---1989}, volume 1464 of {\em Lecture Notes in Math.}, pages 165--251.
  Springer, Berlin, 1991.

\end{thebibliography}

\end{document}